\documentclass[reqno, 12pt]{amsproc}

\usepackage[utf8]{inputenc}
\usepackage{amsmath}
\usepackage[margin=1in]{geometry}
\usepackage{mathtools}
\usepackage{amssymb}
\usepackage{extarrows}
\usepackage{amsthm}
\usepackage{amsmath,mathabx}
\usepackage{mathtools}
\usepackage{cases}
\usepackage{booktabs}
\usepackage{blkarray}
\usepackage[english]{babel}
\usepackage{booktabs}
\usepackage{enumitem}
\usepackage{comment}
\usepackage{standalone}
\usepackage[linesnumbered,ruled,vlined]{algorithm2e}
\SetKwInput{KwInput}{Input}                
\SetKwInput{KwOutput}{Output}              
\usepackage[hidelinks,colorlinks=true,linkcolor=blue,citecolor=blue]{hyperref}
\usepackage[all,cmtip]{xy}
\usepackage{pgfplots,mathpazo}
\pgfplotsset{compat=1.17}
\usepackage{mathrsfs}
\usetikzlibrary{arrows,cd,patterns}

\theoremstyle{plain}
\newtheorem{theorem}{Theorem}[section]
\newtheorem{theoremlettered}{Theorem}

\newtheorem*{theorem*}{Theorem}
\newtheorem{prop}[theorem]{Proposition}
\newtheorem*{prop*}{Proposition}
\newtheorem{lemma}[theorem]{Lemma}
\newtheorem{cor}[theorem]{Corollary}

\newtheorem*{conj*}{Conjecture}
\theoremstyle{remark}
\newtheorem{rem}[theorem]{Remark}

\newtheorem{ex}[theorem]{Example}

\theoremstyle{definition}
\newtheorem{definition}[theorem]{Definition}
\newtheorem{nota}[theorem]{Notation}
\newtheorem*{nota*}{Notation}

\newcommand{\bbN}{\mathbb{N}}
\newcommand{\bbZ}{\mathbb{Z}}
\newcommand{\bbR}{\mathbb{R}}
\newcommand{\bbC}{\mathbb{C}}

\DeclareMathOperator{\rank}{rank}
\newcommand{\set}[1]{\left\{#1\right\}}
\newcommand{\Gfloor}[2]{\lfloor#1\rfloor_{#2}}
\DeclareMathOperator{\udim}{\underline\dim}
\DeclareMathOperator{\sign}{sign}
\newcommand{\K}{\mathbb{F}} 
\newcommand{\cl}[1]{[#1]} 
\newcommand{\Lan}[2]{\mathrm{Lan}_{#1}(#2)} 

\newcommand{\calZ}{\mathcal{Z}}
\newcommand{\Int}{\text{\bf I}}


\newcommand{\Vect}{\text{\rm Vect}}

\newcommand{\HN}[3]{\langle #3,#1\rangle^{#2}}
\newcommand{\subrep}[1]{\langle {#1} \rangle}

\newcommand{\HNtype}[2]{\mathbf{T}[#1;#2]}
\newcommand{\shift}[2]{\mathrm {sh}^{#1}_{#2}}

\newcommand{\id}{\text{id}}
\newcommand{\diff}{\mathrm d}

\newcommand{\Persfp}[1]{\mathrm{Pers}_\text{fp}(#1)}
\newcommand{\Pers}[1]{\mathrm{Pers}(#1)}

\newcommand{\ie}{\textit{i.e.}\ }
\DeclareMathOperator{\Img}{\mathrm{Im}}
\newcommand{\len}[1]{\mathrm{len}(#1)}
\newcommand{\Fil}[1]{\mathrm{Filtr}(#1)}
\newcommand{\ind}[1]{{\mathbb 1}_{#1}}
\newcommand{\vol}{\text{vol}}

\newcommand{\cub}[1]{\mathrm{cub}_{#1}}

\setlength{\parindent}{1.5em}
\setlength{\parskip}{.5em}

\title{Harder-Narasimhan filtrations of persistence modules: metric stability}
\author{Marc FERSZTAND}

\begin{document}

\begin{abstract}
The Harder-Narasimhan types are a family of discrete isomorphism invariants for representations of finite quivers. Previously \cite{HN_discr}, we evaluated their discriminating power in the context of persistence modules over a finite poset, including multiparameter persistence modules (over a finite grid). In particular, we introduced the skyscraper invariant and proved it was strictly finer than the rank invariant. In order to study the stability of the  skyscraper invariant, we extend its definition from the finite to the infinite setting and consider multiparameter persistence modules over $\bbZ ^n$ and $\bbR^n$. We then establish an erosion-type stability result for this  version of the skyscraper invariant. 
\end{abstract}

\maketitle

\section*{Introduction}

\subsection*{Motivation}

Persistent Homology  is one of main tools in Topological Data Analysis (TDA). It provides a compact, computable and easy-to-read summary  measuring how certain geometric and topological features of the data persist across multiple scales. The main objects of study here are sequences of vector spaces connected by linear maps, usually called \emph{persistence modules}. These modules arise naturally when one computes the homology of a scale-indexed filtration of topological spaces built on an underlying dataset. Most Persistent Homology techniques output a discrete summary, called a {\em barcode}, which is complete in the sense that it characterises the given persistence module up to isomorphism \cite{edelsbrunner2000topological,zomorodian2004computing}. This barcode is known to be stable under certain natural perturbations of the persistence modules \cite{chazal2009proximity}.

For some applications, scale is not the only parameter of interest. For instance, adding density as a second parameter makes Persistent Homology techniques more robust to outliers in the data. It therefore becomes necessary to study more general notions of persistence modules, which are given by functors from arbitrary posets to the category of vector spaces. However, Carlsson and Zomorodian showed \cite{carlsson2009theory} that there are no discrete complete invariants for such persistence modules with two or more parameters. They also  introduced the \emph{rank invariant}, a (necessarily incomplete) discrete invariant  which associates to a multiparameter persistence module  the rank of all its maps.  Many other incomplete discrete invariants have since been proposed in the TDA literature. Examples of those include invariants using  ideas from homological  and commutative algebra \cite{bettis,host,blanchette2021homological}, sheaf theory \cite{ks,macpat},  lattice theory \cite{ kim2021generalized, botnanoudot,patel,asashiba2023approximation} and beyond  \cite{cerri2013betti}. More details about  invariants in multiparameter persistence, including the recent approach using relative homological algebra, can be found in the review articles \cite{botnan2022introduction} and \cite{blanchette2023exact}.

For every pointwise finite-dimensional (p.f.d) persistence module over a finite poset, one can define its  \emph{Harder-Narasimhan}(HN) \emph{filtration} \cite{king1994moduli,hille2002stable}. These filtrations are parameterised by a \emph{stability condition} which can be described by two real-valued functions on the elements of the poset. The dimensions of the vector spaces appearing in a  HN filtration are called a \emph{HN type}. They  can be used to build  the \emph{skyscraper invariant}  which is discrete, computable in polynomial time \cite{cheng2023deterministic} and  strictly stronger than the rank invariant  \cite{HN_discr}.

\subsection*{Stability} Multiparameter persistence modules indexed over $\bbZ^n$ or $\bbR^n$ are equipped with the \emph{interleaving distance} \cite{chazal2009proximity,lesnick2015theory} which makes standard TDA pipelines robust to noise in the data. When introducing an invariant of persistence modules, it is desirable to replicate the stability property enjoyed by barcodes of one parameter persistence modules. Explicitly, one seeks a metric on the output space which makes the invariant Lipschitz continuous with respect to the interleaving distance. For instance, the \emph{erosion distance} makes the rank invariant  stable \cite{patel2018generalized,puuska2020erosion,kim2021spatiotemporal} and a similar statement holds for the generalised rank invariant introduced by Kim and Mémoli  \cite{clause2022discriminating}. Moreover, rank exact decompositions, which are an example of the signed decompositions considered in \cite{blanchette2021homological,blanchette2023exact},  are  stable when equipped with a generalisation of the  bottleneck distance \cite{botnan2022bottleneck}.

In this work, we establish a stability result for the skyscraper invariant. In order to do so, we extend the definition of this invariant from the case of p.f.d. persistence modules over finite posets to the setting of finitely presentable persistence modules over $\bbZ^n$ or $\bbR^n$. Using the erosion distance, we define the HN distance on the output of this  version of the skyscraper invariant so that the rank invariant factors continuously through the skyscraper invariant.

\[\begin{tikzcd}
  \text{interleaving distance}\ar[swap,dr,"\text{skyscraper invariant}"]\ar[rr,"\text{rank invariant}"]&\  &\text{erosion distance} \\
  \   & \text{HN distance} \ar[ru,""]& \ 
\end{tikzcd}\]

The Harder-Narasimhan filtrations  were introduced as stratifications of finite-rank vector bundles over a complex curve  and were used to compute the cohomology groups of certain moduli spaces \cite{Harder1974}. It was later adapted  to the case of finite quiver representations and hence of persistence modules over a finite poset \cite{king1994moduli, hille2002stable}. HN filtrations have also been studied in categories which are not necessarily abelian  \cite{bridgeland2007stability,andre2009slope}. However, when applied to the case of finitely presentable $\bbR^n$-persistence modules, these frameworks make restrictive assumptions  about the stability condition.

This paper together with \cite{HN_discr,fersztand2024harder} are part of an effort to use methods inspired by Geometric Invariant Theory in TDA.

\subsection*{Outline and main results}

Fix a  poset $Q$, the \emph{dimension vector} of a pointwise finite-dimensional $Q $-persistence module $V\in \Vect^Q$ is the function $\udim_V\colon x\in Q\mapsto \dim V_x\in \bbZ$. A \emph{stability condition} $Z$ over $Q$ is an additive function sending the dimension vector of a nonzero $Q$-persistence module to a complex number with strictly positive real part\footnote{$Z$ is a group homomorphism  from the Grothendieck group of  (an abelian subcategory of) $\Vect^Q$ to the additive group of complex numbers sending nonzero objects to the right half plane.}.  The $Z$-\emph{slope} of a nonzero  $Q$-persistence module $V\in \Vect^Q$ is the real number
\[\mu_Z(V):=\frac{\Im (Z(\udim_V))}{\Re (Z(\udim_V))}\]
 where $\Re$ and $\Im$ denote the real and imaginary parts. Moreover,  $V$ is said to be $Z$-\emph{semistable} if the $Z$-slopes of its nonzero submodules do not exceed $\mu_Z(V)$. If  $Q$ is finite, there is a unique finite-length filtration:
\[0=V^0\subsetneq V^1\subsetneq \dots \subsetneq V^\ell =V\]
called the \emph{HN filtration} \cite{Harder1974} of $V$ along $Z$ such that the successive quotients $S^i:=V^i/V^{i-1}$ are $Z$-semistable and have strictly decreasing $Z$-slopes. The \emph{HN type} of $V$ along $Z$ is the discrete invariant given by the dimension vectors of the quotients $S^1,\dots,S^\ell$. However, when $Q$ is infinite, Example \ref{ex:reasons_FG} shows that HN filtrations are not always well-defined. 

In Section \ref{sec:HN-exists}, we introduce two sufficient conditions on $Q$ and $Z$ to guarantee the existence of HN filtrations of finitely presentable $Q$-persistence  modules along $Z$.
 The first setting defines a continuous version of the skycraper weights  introduced in \cite{HN_discr} where  $\Im Z$ is built using the functions 
 \[\delta_q\colon \udim_V\mapsto \dim V_q\]
with $q\in Q$.  In the second setting, we restrict ourselves to the case where $Q=\bbR^n$ and we consider a large family of stability conditions induced by complex-valued functions on $Q$. More, precisely, given an integrable function $f\colon Q\to\bbC$ whose real part is strictly positive, we consider the stability condition $Z_f$ defined by
 \[Z_f\colon \udim_V\mapsto \int_{Q} f\udim_V.\]
We are particularly interested in the case where $f$ is a step function -- meaning that it is constant on the parts of a locally finite partition of $Q$ into hyperrectangles. Theorem \ref{thm:existence_HN} gives a more general version of the following statement:

\begin{theoremlettered}\label{thm:main-a}Let $Z$ be a stability condition over an upper semilattice $Q$. If either
\begin{enumerate}[label=(\roman*)]
    \item    $\Im Z$ is a nonnegative finite linear combination of the functions $(\delta_q)_{q\in Q}$; or,
    \item $Q=\bbR^n$ and $Z=Z_f$ is induced by an integrable step function $f\colon \bbR^n\to\bbC$ such that  $\Re f$ is strictly positive and  $\Im f$ is compactly supported,
\end{enumerate}
then HN filtrations along $Z$ of finitely presentable $Q$-persistence modules  exist, are unique and can be computed  over a finite poset.
\end{theoremlettered}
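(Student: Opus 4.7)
The strategy is to reduce both cases to the classical existence and uniqueness theorem for HN filtrations over \emph{finite} posets~\cite{king1994moduli,hille2002stable} by extracting a finite upper sub-semilattice $P \subseteq Q$ on which both the module and the relevant data of $Z$ live. From a finite presentation of $V$ I extract the finite set $S_V \subseteq Q$ of positions of generators and relations. From the stability condition I extract a finite set $S_Z \subseteq Q$: the points $q$ indexing $\Im Z = \sum c_q \delta_q$ in case~(i), or the (finitely many) vertices of the hyperrectangles in $\supp \Im f$ in case~(ii). Let $P$ be the finite upper sub-semilattice of $Q$ generated by $S_V \cup S_Z$. By construction $V = \Lan{\iota}{V'}$ for $V' := V|_P$ and the inclusion $\iota\colon P \hookrightarrow Q$: on each cell $C_p := \set{q \in Q : p = \max\set{p' \in P : p' \leq q}}$ the module $V$ is constant equal to $V_p$ with identity structure maps.

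Next, I transport $Z$ to a stability condition $Z'$ on $\Vect^P$ via $Z'(\udim_W) := Z(\udim_{\Lan{\iota}{W}})$. In case~(i), $\Im Z'$ is again a nonnegative combination of $\delta_p$'s with $p \in P$, and positivity of $\Re Z'$ is inherited. In case~(ii), $\Im Z'(\udim_W) = \sum_p \alpha_p \dim W_p$ with $\alpha_p = \Im f|_{C_p} \cdot \vol(C_p)$, finite because $P$ refines the partition defining $\Im f$ within its compact support; positivity of $\Re Z'$ follows from $\Re f > 0$ and integrability of $f$. The classical HN theorem on $(\Vect^P, Z')$ then produces a unique filtration $0 = W^0 \subsetneq \cdots \subsetneq W^\ell = V'$ with $Z'$-semistable quotients of strictly decreasing slope. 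Applying $\Lan{\iota}{-}$—which here is exact because $(\Lan{\iota}{W})_q$ equals the pointwise value $W_{p_0(q)}$—yields an ascending filtration of $V$ whose successive quotients $\Lan{\iota}{W^i/W^{i-1}}$ have strictly decreasing $Z$-slopes, by construction of $Z'$.

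The remaining point, and the main obstacle, is to verify $Z$-semistability of each $\Lan{\iota}{W^i/W^{i-1}}$ as a $Q$-module. This reduces to a submodule-replacement lemma: for every $Q$-submodule $U \subseteq V$ there is a $P$-submodule $W \subseteq V'$ with $\mu_Z(\Lan{\iota}{W}) \geq \mu_Z(U)$. The natural candidate $W := U|_P$ satisfies $\Lan{\iota}{W} \subseteq U$, since within each cell $C_p$ the identity structure maps give $U_p \subseteq U_q$ for $q \geq p$. In case~(i), $\Im Z$ depends only on $P$-values and is therefore unchanged by the replacement, while $\Re Z$ strictly decreases on any proper sub (by strict positivity of $\Re Z$ on nonzero modules); combined with the global nonnegativity $\Im Z \geq 0$, this yields $\mu_Z(\Lan{\iota}{W}) \geq \mu_Z(U)$. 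Case~(ii) is the principal obstacle, as $\Im Z$ may change under the replacement with a sign depending on $\Im f$ on each cell; the expected resolution is a cell-by-cell variational argument showing that, among all $Q$-submodules with prescribed $P$-restriction, the slope is extremized at a Kan extension—within a single cell $C_p$ the quantity $\int_{C_p} \dim U_q\, dq$ traces a one-parameter family whose extremum is attained by a constant filling, and a careful global summation over cells using the sign of $\Im f$ should close the inequality. Uniqueness of the HN filtration on $V$ then follows from uniqueness on $P$ together with the observation that restriction along $\iota$ sends any HN filtration of $V$ to a $Z'$-HN filtration of $V'$.
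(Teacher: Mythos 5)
Your overall architecture matches the paper's: discretise $V$ and $Z$ over a finite upper semilattice $P$, pull back $Z$ to a stability condition $Z'$ on $\Pers P$, run the classical finite HN theorem there, and then push the filtration back up via $\Lan{\iota}{-}$, with the crux being that the induction functor sends $Z'$-semistable modules to $Z$-semistable ones. The paper isolates exactly this step as Propositions~\ref{prop:HN_existence_eval} and \ref{prop:induction-step-fun-conserves-ss}, then concludes via Lemma~\ref{lmm:preserve-ss-implies-HN}. Your treatment of case~(i) is essentially the paper's Proposition~\ref{prop:HN_existence_eval}: you replace an arbitrary submodule $U$ by $\Lan{\iota}{U|_P}\subseteq U$, note that the imaginary part is unchanged because $\Im Z$ only evaluates at points of $P$, and use that $\Re Z$ can only shrink under passage to a submodule. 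That is correct.

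Case~(ii) is where the gap lies. The ``submodule-replacement lemma'' in the specific form you propose --- replace $U$ by $\Lan{\iota}{U|_P}$ --- is false, even when $\Im f\geqslant 0$. Take $Q=\bbR$, $V=\K_{[0,2)}$, $\Im f=\ind{[0,1)}$, $\Re f\equiv 1$ on $[0,2)$, and $P\supseteq\{0,1,2\}$ containing all the grid points you prescribe. For the submodule $U=\K_{[1/2,2)}$ one computes $\Im Z(U)=1/2$ and $\Re Z(U)=3/2$, so $\mu_Z(U)=1/3>0$, while $\Lan{\iota}{U|_P}=\K_{[1,2)}$ has $\Im Z=0$ and hence slope $0$. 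So pixelating by the floor function \emph{strictly decreases} the slope here, and your inequality $\mu_Z(\Lan{\iota}{U|_P})\geqslant\mu_Z(U)$ fails. Nor can the replacement lemma as you stated it be repaired by choosing a different $W$: in this example the only $W\subseteq V'$ with $\mu_Z(\Lan{\iota}W)\geqslant 1/3$ is $W=V'$ itself, so proving the replacement lemma would already presuppose the very semistability inequality $\mu_Z(U)\leqslant\mu_Z(V)$ you are trying to establish. Thus this reduction is circular for case~(ii).

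Your closing remark about a ``one-parameter family whose extremum is attained by a constant filling'' does point toward the right idea, but as stated it is too coarse: the paper's Proposition~\ref{prop:induction-step-fun-conserves-ss} does \emph{not} pixelate in one shot. Instead it chooses a finer grid $H$ adapted to both $G$ and the given submodule $W$, runs an induction over $N_H=\lvert\bigsqcup_i \Img H_i\setminus\Img G_i\rvert$, and at each step slides a \emph{single} extra grid coordinate along an affine one-parameter family $H^\bullet$, using Lemma~\ref{lmm:affine_at_grid_fun} to show that $x\mapsto\mu_Z(H^x_*X)$ is monotonic on the open interval, upper semicontinuous at the endpoints, and hence maximised at a configuration where that grid coordinate has been absorbed. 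The sign of $\Im f$ plays no direct role; rather, what makes the argument work is that the slope is a ratio of two functions each affine in the moving grid coordinate, together with an injectivity observation ($X_{r-e_i}\hookrightarrow X_r$) coming from the fact that $W$ lives inside $G_*U$. None of this is visible in your sketch, and I do not see how to turn a single ``constant filling'' step into a correct argument, for the reason given by the counterexample above. You should also note a minor slip: to prove semistability of $\Lan{\iota}{(W^i/W^{i-1})}$ you must run the argument with $\Lan{\iota}{(W^i/W^{i-1})}$ in place of $V$, not with $V$ itself; this is cosmetic but worth fixing.
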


For the rest of this summary, $Z$ is a stability condition   over $Q=\bbR^n$ which satisfies the hypotheses of Theorem \ref{thm:main-a}. Let $x\in \bbR^n$, the $x$-\emph{shift} of  $V\in\Vect^Q$ is the $\bbR^n$-persistence module $T_x^*V$ obtained by shifting all the indices of $V$ by $x$.   Given a finitely presentable $\bbR^n$-persistence module $V$, temporarily denote by  $V^{x,\bullet}$  the HN filtration of $V$ along the stability condition  $Z^x\colon\udim_V\mapsto Z(\udim_{T_x^*V})$. For each $\theta\in\bbR$, we define (Definition \ref{def:rank filtrations}) the  function $s^\theta_{Z,V}$ which sends $x\leqslant y\in \bbR^n$ to the integer
 \[s_{Z,V}^\theta (x,y):=\rank(V^{x,i})_{x\leqslant y}\]
    where $i$ is the largest index such that either $i=0$ or $\mu_{Z^x}(V^{x,i}/V^{x,i-1})\geqslant \theta$. The  functions $(s^\theta_{Z,V})$ constitute a descending filtration of the rank invariant  indexed by  $\theta\in\bbR$. We call it the \emph{HN filtered rank invariant}  of $V$ along $Z$. 
   
   In Section \ref{sec:stability}, we establish a stability result for HN filtered rank invariants. Let $V$ and $W$ be two  $\bbR^n$-persistence modules and denote by $\vec\varepsilon$  the vector $(\varepsilon,\dots,\varepsilon)\in\bbR^n$. The \emph{interleaving distance}  $d_I(V,W)$ between $V$ and $W$ \cite{lesnick2015theory} is the infimum of all $\varepsilon \geqslant 0$ such that there exist maps of persistence modules 
   \[\phi_\bullet\colon V\to T_{\vec\varepsilon}^*W\hspace{3em}\text{ and }\hspace{3em}\psi_\bullet\colon W\to T_{\vec\varepsilon}^*V\]
   whose compositions $\psi_{\bullet+\vec\varepsilon}\circ\phi_\bullet$ and $\phi_{\bullet+\vec\varepsilon}\circ\psi_\bullet$ are given by the internal maps of $V$ and $W$. When $V$ is finitely presentable, we prove that each $s^\theta_{Z,V}$ can be seen as a functor from $(\bbR^n)^\text{opp}\times\bbR^n$ to $(\bbZ_+\cup\set{\infty})^\text{opp}$.  The \emph{erosion distance} $d_E(s,r)$  between two such functors $r$ and $s$ \cite{patel2018generalized} is the infimum of all $\varepsilon\geqslant 0$ such that for each $(x,y)\in \bbR^n \times\bbR^n$, we have
   \[s(x-\vec\varepsilon,y+\vec\varepsilon)\leqslant r(x,y)\qquad\text{and }\qquad r(x-\vec\varepsilon,y+\vec\varepsilon)\leqslant s(x,y).\]

   The following result, stated in full details in Theorem \ref{thm:stability_erosion}, shows that HN filtered rank invariants are stable:
\begin{theoremlettered}\label{thm:main-b}  For every finitely presentable $\bbR^n$-persistence modules $V$ and $W$, we have
\[\sup_{\theta \in\bbR}d_E(s_{Z,V}^\theta,s_{Z,W}^\theta)\leqslant d_I(V,W).\]
\end{theoremlettered}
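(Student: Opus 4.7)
The plan is to reformulate the submodule $V^{x,i}$ appearing in the definition of $s^\theta_{Z,V}$ so that it becomes functorial under morphisms, and then exploit the interleaving to factor internal maps of $V$ through $W$ at the level of this submodule. The first step is to identify $V^{x,i}$ with the largest submodule $V^{[x,\theta]}\subseteq V$ whose $Z^x$-HN slopes are all $\geqslant\theta$, equivalently the largest subobject of $V$ lying in the Serre subcategory $\mathcal{S}^{x,\geqslant\theta}$ generated by $Z^x$-semistable modules of slope $\geqslant\theta$. The argument is the classical one: the HN filtration exhibits $V^{x,i}$ as an iterated extension with slopes $\geqslant\theta$, while any competing submodule $U\subseteq V$ with HN slopes all $\geqslant\theta$ cannot map nontrivially into the quotient $V/V^{x,i}$ (whose HN slopes are all $<\theta$ by the choice of $i$), forcing $U\subseteq V^{x,i}$. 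This yields $s^\theta_{Z,V}(x,y)=\rank\bigl((V^{[x,\theta]})_{x\to y}\bigr)$ and, since $\mathcal{S}^{x,\geqslant\theta}$ is closed under quotients, the functoriality $\phi(V^{[x,\theta]})\subseteq W^{[x,\theta]}$ for every morphism $\phi\colon V\to W$. A short computation based on $\mu_{Z^{x-\vec\varepsilon}}(T_{\vec\varepsilon}^*U)=\mu_{Z^x}(U)$ then gives the shift-equivariance $(T_{\vec\varepsilon}^*W)^{[x-\vec\varepsilon,\theta]}=T_{\vec\varepsilon}^*(W^{[x,\theta]})$.

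With these tools the stability argument is short. Fix $\varepsilon>d_I(V,W)$ and interleaving maps $\phi\colon V\to T_{\vec\varepsilon}^*W$, $\psi\colon W\to T_{\vec\varepsilon}^*V$. The interleaving identity factors the internal map of $V$ from $x-\vec\varepsilon$ to $y+\vec\varepsilon$ as
\[V_{x-\vec\varepsilon}\xrightarrow{\phi_{x-\vec\varepsilon}} W_x \to W_y \xrightarrow{\psi_y} V_{y+\vec\varepsilon}.\]
Applying the functoriality of $(-)^{[-,\theta]}$ to $\phi$ (with stability condition $Z^{x-\vec\varepsilon}$ on both $V$ and $T_{\vec\varepsilon}^*W$), combined with the shift-equivariance above, shows that $\phi_{x-\vec\varepsilon}$ sends $V^{[x-\vec\varepsilon,\theta]}_{x-\vec\varepsilon}$ into $W^{[x,\theta]}_x$. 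The middle arrow keeps us inside the submodule $W^{[x,\theta]}$, and $\psi_y$ lands inside $V^{[x+\vec\varepsilon,\theta]}_{y+\vec\varepsilon}\subseteq V_{y+\vec\varepsilon}$. Restricted to $V^{[x-\vec\varepsilon,\theta]}_{x-\vec\varepsilon}$, the whole composition is precisely the internal map of the submodule $V^{[x-\vec\varepsilon,\theta]}$ from $x-\vec\varepsilon$ to $y+\vec\varepsilon$, whose rank equals $s^\theta_{Z,V}(x-\vec\varepsilon,y+\vec\varepsilon)$. Since the rank of a composition is at most the rank of any factor, this is bounded above by $\rank\bigl((W^{[x,\theta]})_{x\to y}\bigr)=s^\theta_{Z,W}(x,y)$. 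Swapping the roles of $V$ and $W$ yields the symmetric inequality, giving $d_E(s^\theta_{Z,V},s^\theta_{Z,W})\leqslant\varepsilon$ uniformly in $\theta$ and hence the theorem.

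The main obstacle lies in the very first step. The reformulation of $V^{x,i}$ as the maximal subobject in $\mathcal{S}^{x,\geqslant\theta}$ is classical in small, finite-length abelian categories, but extending it to finitely presentable $\bbR^n$-persistence modules requires care: Theorem \ref{thm:main-a} ensures that HN filtrations exist, yet arbitrary competing submodules $U\subseteq V$ need not be finitely presentable, and the Serre-subcategory closure properties have to be verified in this enlarged class. A direct-limit argument reducing to finitely presentable subobjects, or a preliminary lemma in the spirit of Section~\ref{sec:HN-exists}, should suffice. Once this foundational step is in place, the remainder of the proof is a refinement of the Patel-style rank-invariant stability argument, with the truncated HN construction $V\mapsto V^{[x,\theta]}$ playing the role that the identity functor plays there.
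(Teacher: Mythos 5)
Your proof follows the same high-level structure as the paper's: functoriality of the truncated HN filtration $\HN{Z}{\theta}{-}$ under morphisms, a factorization of the internal map $V_{x-\vec\varepsilon\leqslant y+\vec\varepsilon}$ through $W_x\to W_y$ via the interleaving, and a rank-of-composition bound --- this is precisely the content of Corollary~\ref{cor:functorial-HN-continuous} and Lemma~\ref{lmm:stab_main}. The genuine difference is how functoriality is established. The paper discretizes $V$, $V'$ and $Z$ by a common finite grid and invokes the finite-poset functoriality result of Hille and de la Pe\~na (Proposition~\ref{prop:functorial_HN-finite}), using Theorem~\ref{thm:existence_HN} to transport the conclusion back to $\Persfp Q$. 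You instead propose the intrinsic torsion-class argument: $\HN{Z}{\theta}{V}$ is the maximal subobject with all HN slopes $\geqslant\theta$, and the image of a morphism lies in the torsion class. This is a legitimate alternative, and your flagged ``main obstacle'' is actually milder than you fear: since $\Persfp Q$ is abelian for $Q$ an upper semilattice (Lemma~\ref{lmm:abelianity-semilattice}), the image $\phi(\HN{Z}{\theta}{V})$ of any morphism in $\Persfp Q$ stays in $\Persfp Q$, so maximality need only be asserted over finitely presentable subobjects. That follows from Theorem~\ref{thm:existence_HN} together with the standard vanishing of morphisms from HN slopes $\geqslant\theta$ to HN slopes $<\theta$; non-finitely-presentable subobjects never enter the argument. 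The paper's route is shorter because these facts are already packaged in Section~\ref{sec:HN-exists}; yours would be marginally more self-contained. One cosmetic remark: the paper computes the HN filtrations of $T_x^*V$ and $T_{x'}^*W$ along the \emph{fixed} stability condition $Z$, so Corollary~\ref{cor:functorial-HN-continuous} applies directly and the shift-equivariance bookkeeping ($Z^x$ vs.\ $Z^{x-\vec\varepsilon}$) that you carry out explicitly is not needed.
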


The above  theorem may be used to produce a stability result for the HN filtered version of the persistence landscapes \cite{persistencelandscapes,vipond2020multiparameter} -- see Corollary \ref{cor:landscape-stability}.

Finally, in Appendix  \ref{append:semialgebraic}, we prove in the setting of  Theorem \ref{thm:main-a}(ii)  that if   $\Im f$ is nonnegative, then the HN filtered rank invariants along $Z_f$ are semialgebraically constructible functions in the sense of \cite{mccrory1997algebraically}.

\subsection*{Acknowledgements}
The author would like to thank Vidit Nanda and Ulrike Tillmann for their invaluable advice and support. He is also grateful to Emile Jacquard for many helpful discussions. The author is a  member of the Centre for Topological Data Analysis, funded by the EPSRC grant EP/R018472/1. 

\section{Preliminaries} 
We first introduce  discretisable persistence modules  over general posets \cite{botnan2020relative,chacholski2021realisations}  and  over finite products of totally ordered sets \cite{lesnick2015-2dpers-visu}.  We then briefly go through the concepts of stability conditions, slope stability, HN filtrations and HN types in  abelian categories \cite{bridgeland2007stability} and we define the pullback of a stability condition. Finally, in the case of persistence modules over a finite poset \cite{hille2002stable}, we  recall the definition of the skyscraper invariant introduced in \cite{HN_discr}.

\subsection{Persistence modules} Given a  field $\K$ and a poset $P$, a $P$-\emph{persistence module} is a functor $V:P\to \Vect_\K$ from $P$ to  the category of $\K$-vector spaces and linear maps.  The \emph{dimension vector} of a $P$-persistence module $V$ is the  function $\udim_V:P\to \bbN\cup\set\infty$ which sends $p\in P$ to $\dim V_p$. The \emph{rank invariant} of a persistence module $V$  is the function $\rho_V:P\times P\to \bbZ\cup\set{+\infty}$ given by
\[\rho_V(p,{p'}):=\begin{cases}
    \rank V_{p\leqslant {p'}}&\text{ if }p \leqslant {p'}\text{ in }P\\
     +\infty&\text{ otherwise}\\
\end{cases}\]

We  fix once and for all the field $\K$ and will henceforth omit it from the notation. We further ask $P$-persistence modules to be pointwise finite-dimensional -- meaning that the dimension vectors are integer-valued functions. The   $P$-persistence modules define an abelian category $\Pers P$ where morphisms are  natural transformations.

A subset $I$ of $P$ is called a \emph{spread} (or sometimes an interval) if it is:
\begin{itemize}[label=-]
    \item \underline{connected}: for all $u,v\in I$, there is a sequence  $(u=u_0,u_1,\dots,u_\ell=v)$ in $I$ such that $u_i$ and $u_{i+1}$ are comparable for each $i$; and
    \item \underline{convex}: $I$ contains the set $\set{p\in P\mid u\leqslant p \leqslant v}$ for each $u\leqslant v$ in $I$.
\end{itemize}
For instance, given $p\in P$, the set $\subrep p:=\set{{p'}\in P\mid {p'}\geqslant p}$ is a spread.  The \emph{spread module} $\K_I$ is the indecomposable module whose spaces $(\K_I)_p$ are $\K$ if $p\in I$ and 0 otherwise and whose maps between   nonzero spaces are  identities.

\subsection{Discretisable persistence modules} Let $P$ and $Q$ be two posets. A \emph{map of poset} $f\colon P\to Q$ is an increasing map from $P$ to $Q$. The \emph{pullback} of a $Q$-persistence module $V$ by $f$ is the $P$-persistence module
\[f^*V:=V\circ f\]
where $\circ$ is the composition between functors. The \emph{pushforward} of a $P$-persistence module $U$ by   $f$ is the $Q$-persistence module 
\[f_*U:=\Lan f U\]
where $\Lan f U$ is the left Kan extension  of $U$ along $f$. More concretely, for $q\in Q$, we have $(f_*U)_q:=\underset{p\mid f(p)\leqslant q}\varinjlim U_p$. The functors $f^*\colon \Pers Q\to \Pers P$ and $f_*\colon \Pers P\to \Pers Q$ are respectively called the restriction (sometimes pullback) and induction (sometimes extension \cite{blanchette2023exact} or pushforward) functors. They form \cite[Lemma 2.15]{botnan2020relative} an adjoint pair $(f_*,f^*)$ and if moreover $f$ is  an inclusion of posets (\ie $f$ is fully faithful), then the unit $\id_{\Pers P}\Rightarrow f^*f_*$ is a natural isomorphism \cite[Lemma 2.19]{botnan2020relative}.

\begin{definition}\label{def:discretisable}
    A $Q$-persistence module $V$ is said to be $f$-\emph{discretisable} if it is isomorphic in $\Pers Q$ to the pushforward by $f$ of some $P$-persistence module $U$
    \[V\simeq f_*U.\]
     In that case, we say that $f$ is \emph{adapted} to $V$.  A $Q$-persistence module is said to be \emph{discretisable} (or tame) if  it is $f$-discretisable for some map of posets $f$ from a \underline{finite} poset $P$ to $Q$.   We denote by $\Persfp Q\subset \Pers Q$  the full subcategory  of discretisable $Q$-persistence modules.
\end{definition}
 Observe that the induction functor of a map of (not necessarily finite) posets $g\colon Q\to Q'$ sends tame $Q$-persistence modules to tame $Q'$-persistence modules.
\begin{ex}
\begin{itemize}
    \item When $Q$ is finite, we have $\Persfp Q=\Pers Q$
    \item Discretisable $\bbR$-persistence modules are the ones whose barcode consists of a finite number of intervals of the form $[a,b)$ with $(a,b)\in \bbR\times \bbR\cup\set\infty$
\end{itemize}
    \end{ex}

    A $Q$-persistence module is \emph{free and finitely generated} if it decomposes as a direct sum of a finite number of spread modules of the form $\K_{\subrep q}$ with $q\in Q$. A $Q$-persistence module $V$ is said to be \emph{finitely presentable} if there is a map $f$ between two free and finitely generated $Q$-persistence modules such that $V\simeq\mathrm{coker\ } f $. Our interest in finitely presentable persistence modules is justified by  the fact that a $Q$-persistence module lies in $\Persfp Q$ if and only if it is finitely presentable {\cite[10.3]{chacholski2021realisations}}. 

   A \emph{refinement} of $f\colon P\to Q$ is a map of posets $f'\colon P'\to Q$ such that $f$ factors through $f'$. Namely, there is a map of posets $g$ fitting into the commutative diagram
   \[\begin{tikzcd}
       P\ar[d,"f",swap]\ar[r,dotted,"g"]&P'\ar[dl,"f'"]\\
       Q
   \end{tikzcd}\]
   \begin{rem}\label{rem:adapted_to_incl}
         For every subset $S$ of $Q$ containing $f(P)$, the inclusion of posets $f'\colon S\hookrightarrow Q$ is a refinement of $f$. Hence, a pair of maps of posets $f^1\colon P^1\to Q$ and $f^2\colon P^2\to Q$ has a common refinement given by the inclusion $f'\colon f^1(P^1)\cup f^2(P^2)\hookrightarrow Q$. 
   \end{rem}

   The existence of common refinements implies that the subcategory  $\Persfp Q$ is additive; however, in general, it is not abelian. It is thus necessary to make assumptions about the poset $Q$. We recall that $Q$ is an \emph{upper semilattice} if every pair of elements of $Q$ admits a least upper bound. 

   \begin{lemma}
       \label{lmm:abelianity-semilattice}
       Assume that $Q$ is an upper semilattice, then,
       \begin{enumerate}[label=(\roman*)]
           \item  $\Persfp Q$ is an abelian  full subcategory of $\Pers Q$ {\cite[8.5]{chacholski2021realisations}} and 
           \item  any map of posets $f\colon P\to Q$ from a finite upper semilattice $P$  has an exact induction functor  $f_*\colon \Pers {P}\to  \Pers Q$ \cite[10.5]{chacholski2021realisations}. 
       \end{enumerate}
   \end{lemma}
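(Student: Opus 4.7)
My plan is to prove part (ii) first, then reduce part (i) to a computation in a finite upper semilattice and invoke (ii).

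For (ii), I would start from the observation that the induction functor $f_* = \Lan{f}{(\cdot)}$ is a left adjoint to the restriction $f^*$, so it already preserves all colimits and in particular is right exact (preserves cokernels). What remains is left exactness: preservation of monomorphisms. Using the pointwise formula $(f_*U)_q = \varinjlim_{p\,:\, f(p)\leqslant q}U_p$, this reduces to showing that for each $q \in Q$, the colimit functor over the finite subposet $P_{\leqslant q} := \{p\in P : f(p)\leqslant q\}$ sends injective natural transformations to injections. Here the hypothesis that $P$ is a \emph{finite} upper semilattice enters: the joins in $P$ allow one to express the colimit explicitly (essentially as a quotient detectable via the values of $U$ at joins), giving the desired left exactness. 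This is precisely the content of \cite[10.5]{chacholski2021realisations}, which I would cite directly.

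For (i), first I would note that $\Persfp{Q}$ is additive: the direct sum of $f^V_*U^V$ and $f^W_*U^W$ is the pushforward of $U^V\oplus U^W$ along the disjoint-union map $P^V\sqcup P^W\to Q$. Then, given a morphism $\phi:V\to W$ in $\Persfp{Q}$, I choose adapted maps $f^V:P^V\to Q$ and $f^W:P^W\to Q$ and form $S:=f^V(P^V)\cup f^W(P^W)\subseteq Q$. By Remark \ref{rem:adapted_to_incl}, the inclusion $S\hookrightarrow Q$ is a common refinement adapted to both $V$ and $W$. Using the upper semilattice structure on $Q$, I enlarge $S$ to its closure $\bar S$ under binary joins taken in $Q$; since $|S|$ is finite, $|\bar S|\leqslant 2^{|S|}$ is also finite, and $\bar S$ inherits the structure of a finite upper semilattice from $Q$. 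Let $i:\bar S\hookrightarrow Q$ denote this inclusion: it is fully faithful, so the unit $\id\simeq i^* i_*$ is an isomorphism by \cite[Lemma 2.19]{botnan2020relative}. Consequently $\phi$ is identified with a morphism $\tilde\phi:\tilde V\to\tilde W$ in the category $\Pers{\bar S}$, which is abelian because $\bar S$ is small and $\Vect$ is abelian. Applying $i_*$ — which is exact by part (ii) because $\bar S$ is a finite upper semilattice — yields $\ker\phi\simeq i_*\ker\tilde\phi$ and $\mathrm{coker}\,\phi\simeq i_*\mathrm{coker}\,\tilde\phi$ in $\Pers{Q}$, both of which are manifestly discretisable.

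The hard part will be justifying part (ii) carefully. The subposet $P_{\leqslant q}$ is in general neither filtered nor closed under the joins of $P$, so the standard ``filtered colimits are exact in $\Vect$'' argument does not apply directly; instead, one must exploit the join structure of $P$ itself — not of $P_{\leqslant q}$ — to describe the stalk $(f_*U)_q$ in a form that is manifestly left exact in $U$. A secondary delicate point in (i) is that it is essential for $Q$ to be an upper semilattice so that the finite subset $S$ can be completed to a finite upper semilattice $\bar S$ \emph{within} $Q$; without this hypothesis the reduction to (ii) would break down.
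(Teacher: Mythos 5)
The paper proves this lemma purely by citation to \cite[8.5, 10.5]{chacholski2021realisations}, so you are supplying an argument where the paper offers none. Your reduction of (i) to (ii) is sound: $\Persfp Q$ is a full additive subcategory (since a common adapted finite poset can always be built), and closing $S=f^V(P^V)\cup f^W(P^W)$ under binary joins in $Q$ yields a finite upper semilattice $\bar S$ with a join-preserving inclusion $i\colon\bar S\hookrightarrow Q$. Because the unit $\id\Rightarrow i^*i_*$ is an isomorphism, $\phi$ lives in $\Pers{\bar S}$, and the exact $i_*$ transports kernels and cokernels back into $\Persfp Q$, which sit correctly inside $\Pers Q$ by fullness. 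One point worth noticing is that your proof of (i) never uses the full strength of (ii): the only pushforward you need to be exact is $i_*$ along a join-preserving inclusion of a finite upper semilattice, and in that case the exactness is immediate --- since $\bar S_{\leqslant q}=\{s\in\bar S\mid s\leqslant q\}$ is closed under joins, it is directed and (when nonempty) has a maximum $\Gfloor{q}{i}$, so $(i_*X)_q$ is just evaluation $X_{\Gfloor{q}{i}}$, manifestly exact in $X$. You could therefore make part (i) fully self-contained. By contrast, the general statement (ii), for an arbitrary monotone map from a finite upper semilattice, is genuinely harder for exactly the reason you flag: the comma poset $P_{\leqslant q}$ need not be directed and need not be join-closed in $P$, so the stalk colimit is not a filtered colimit and not mere evaluation, and one must use the joins of $P$ (which land outside $P_{\leqslant q}$) to control it. Since you, like the paper, ultimately defer (ii) to \cite[10.5]{chacholski2021realisations}, this is consistent; but given that you identified the obstruction you might either restrict (ii) to the join-preserving case actually used and prove it, or state clearly that (ii) in full generality is being taken as a black box.
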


We now assume that $Q$ is an upper semilattice. By remark \ref{rem:adapted_to_incl}, every discretisable $Q$-persistence module can be discretised by an inclusion  of a finite upper semilattice $f\colon P\hookrightarrow Q$ into $Q$. In this setting,    we can define the $f$-\emph{floor function} which sends $q\in Q$ to
\[\Gfloor{q}{f}:= \max \set{p\in P\mid f(p)\leqslant q}\]
where $\Gfloor{q}{f}\in P\cup\set{-\infty}$. By definition, the pushforward of some $U\in \Pers P$ by $f$ can be written pointwise as $(f_*U)_q= U_{\Gfloor{q}{f}}$ with the convention $U_{-\infty}=0$. We further denote
\[ \cub f(p) :=\set{q\in Q\mid \Gfloor {q}f=p}\]
the fiber of $p\in P\cup\set{-\infty}$ by $\Gfloor{\ }f$.  

We will reuse the vocabulary defined in Definition \ref{def:discretisable}  for functions from $Q$. Given a map of upper semilattices $f\colon P\hookrightarrow Q$ with $P$ finite and a set $Y$, the restriction functor $f^*$ is the precomposition by $f$ and the induction functor $f_*$ sends a function $a\colon P\to Y$  to 
\[f_*a(q):= a(\Gfloor{q}{f})\]
where $q\in Q$.  Let $\mathcal F(Q,Y)$ denote the set of  functions $a\colon Q\to Y$ which are discretisable by some map of posets $f\colon P\hookrightarrow Q$ with $P$ finite. Discretisable functions over a semialgebraic set are semialgebraically constructible in the sense of Definition \ref{def:constructible}. Observe that for $U\in \Pers P$ and $V\in\Pers Q$, we have 
\begin{equation}\label{eqn:dim_commute_pushforward}
    \udim_{f^*V}=f^*\udim_V\qquad\text{and}\qquad \udim_{f_*U}=f_*\udim_U.
\end{equation}
Hence, the assignment $\udim\colon V \mapsto \udim_V$ defines a  surjective function from $\Persfp Q$ to $ \mathcal F(Q,Y)$.

\subsection{Grid functions} In this subsection, the poset $Q$ is the product of $n$   totally ordered posets equipped with the product partial order. This setting contains our two main cases of interest  where $Q$ is either $\bbZ^n$ or $\bbR^n$. We will give a more concrete definition of discretisable persistence modules.

\begin{definition}[{\cite[Section 2.5]{lesnick2015-2dpers-visu}}]\label{def:grid_fun}
    A \emph{grid function}  $G:=G_1\times\dots\times G_n$ over $Q$ is an embedding of a  cube $P=P_1\times \dots \times P_n$ of $ \bbZ^n$ into $Q$ which is defined coordinate-wise by  strictly increasing functions $G_i\colon P_i\hookrightarrow Q_i$ and such that $\lim_{t \to \pm \infty} G_i(t)=\pm\infty$ -- if those limits exist. When $P$ is finite,  $G$ is said to be a finite grid function.
\end{definition}

Observe that by Remark \ref{rem:adapted_to_incl}, a $Q$-persistence module is discretisable if and only if it is discretisable by a finite grid function.

An \emph{interval} in a totally ordered poset $T$ is a subset $I\subset T$ such that for $x,y\in I$, we have $x\leqslant z\leqslant y\implies z\in I$. A \emph{cube} of $Q=Q_1\times \dots \times Q_n$ is a subset $I_1\times \dots\times I_n\subset Q$ such that each $I_i$ is an interval of $Q_i$. 

The interest of working with a finite grid function $G\colon P\hookrightarrow Q$ is that the floor function $\Gfloor{\ }G$ can be computed coordinate-wise and that for every $p\in P$  the fibers $\cub G({p})$ are cubes of the form $\prod_{i=1}^n[a_i,b_i)$ where  $(a_i,b_i)\in Q_i\times (Q_i\cup\set\infty)$. In the example below, the black disks are the image of $G$, the dashed area is a fibre  of some $p\in P$ under $\Gfloor\ G$ and the square node is some $q\in Q$:

\begin{center}
    \includestandalone[width=8cm]{images/grid2}
\end{center}

Let $V$ be  a $G$-discretisable $Q$-persistence modules. For $q\in Q$ such that $\Gfloor qG\neq -\infty$, the map $V_{G(\Gfloor qG)\leqslant q}$ is an  isomorphism. In particular, the dimension vector $\udim_V$ is constant in each of the $\cub G(p)$.

Most of the above notions can  easily be adapted to the case where $G\colon P\hookrightarrow Q$ is an infinite grid function. Indeed, for $q\in Q$, the subset $\set{p\in P\mid G(p)\leqslant q}$ is a cube of $\bbZ^n$ which is bounded above in each direction so that  its maximum $\Gfloor qf$ is well-defined. Hence, as before, the induction functor $G_*$ is  exact. 

Let $G\colon P\hookrightarrow Q$ be a grid function, a refinement $G'$ of $G$ such that $\Img G=\Img G'\cap c$ for some cube $c$ of $Q$ is called an \emph{extension} of $G$. 

  \begin{rem}\label{rmk:noninj-grid}
For convenience, we will sometimes consider product of $n$ maps of posets $G\colon P\to  Q$ which are not necessarily injective. We call these grid functions \emph{improper} grid functions.
    \end{rem}

\subsection{HN filtrations in abelian categories}\label{subsec:HN_abelian}

Let $\mathcal A$ be an  abelian category. The \emph{Grothen\-dieck group} of $\mathcal A$ is the abelian group $K(\mathcal A)$  generated by the isomorphism classes $\cl V$ of objects subject to the relations $\cl {V'}=\cl V+\cl {V''}$ for every short exact sequence $0\to V\to V'\to V''\to 0$ in $\mathcal A$. A \emph{stability condition} $Z$ over $\mathcal A$ is a group  morphism  between the Grothendieck group of $\mathcal A$ and  $(\bbC,+)$ which sends nonzero objects to the right open half space $\set{z\in\bbC\mid\Re \  z>0}$. The \emph{slope} of a nonzero object $V\in\mathcal A$ along  $Z$ is the real number $\mu_{Z}(V):=\frac{\Im Z(\cl V)}{\Re  Z(\cl V)}$. Moreover, $V$ is said to be $Z$-\emph{stable} if all its proper subobjects  have a strictly smaller  slope:
\[ 0\subsetneq W\subsetneq V \quad\Longrightarrow\quad \mu_Z(W)<\mu_Z(V)\]
 if instead the inequalities are weak, then $V$ is said to be $Z$-\emph{semistable}. 

\begin{definition}[{\cite{Harder1974,hille2002stable,rudakov}}]\label{def:HN_abelian} Given a nonzero object $V$ in $\mathcal A$, a HN filtration of $V$ along $Z$  is a finite sequence of subobjects $0=V^0\subsetneq V^1\subsetneq \dots\subsetneq V^\ell=V$ such that each quotient $V^{j+1}/V^j$ is $Z$-semistable and $\mu_Z(V^1/V^0)>\mu_Z(V^2/V^1)>\dots>\mu_Z(V^\ell/V^{\ell-1})$.
\end{definition}

\begin{prop}\label{prop:exist_HN_abelian}{\cite[Proposition 2.4]{bridgeland2007stability}}  When they exist, HN filtrations are unique. Moreover, if  the following two conditions are satisfied 
\begin{enumerate}
    \item there are no infinite sequences of subobjects 
$\dots\subset V_{j+1}\subset V_j \subset\dots\subset V_2\subset V_1$ in $\mathcal A$  with $\mu_Z(V_{j+1})>\mu_Z(V_j)$  for all $j$,
    \item there are no infinite sequences of quotients 
$V_1\twoheadrightarrow V_2 \twoheadrightarrow \dots \twoheadrightarrow V_j\twoheadrightarrow V_{j+1}\twoheadrightarrow\dots$ in $\mathcal A$ with $\mu_Z(V_{j})>\mu_Z(V_{j+1})$  for all $j$,
\end{enumerate}
then every   nonzero object in $\mathcal A$ has a unique HN filtration along $Z$.    
\end{prop}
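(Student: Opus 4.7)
The plan splits into uniqueness and existence, both resting on the \emph{vanishing lemma}: if $A,B \in \mathcal A$ are $Z$-semistable with $\mu_Z(A) > \mu_Z(B)$ then $\mathrm{Hom}_{\mathcal A}(A,B) = 0$. Indeed, for a nonzero $\varphi \colon A \to B$ the image $I = \mathrm{im}(\varphi)$ is at once a quotient of $A$, forcing $\mu_Z(I) \geqslant \mu_Z(A)$ by semistability of $A$, and a subobject of $B$, forcing $\mu_Z(I) \leqslant \mu_Z(B)$; these contradict $\mu_Z(A) > \mu_Z(B)$.

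For uniqueness I would compare two HN filtrations $(V^i)$ and $(W^j)$ of $V$ by examining the bottom pieces $V^1$ and $W^1$. The composition $V^1 \hookrightarrow V \twoheadrightarrow V/W^1$ has image that is a subobject of $V/W^1$, and $V/W^1$ is filtered by the remaining HN quotients $W^{j+1}/W^{j}$ ($j\geqslant 1$), each semistable of slope strictly less than $\mu_Z(W^1)$. Combined with $\mu_Z(V^1)\geqslant \mu_Z(W^1)$ (both are maximal-slope semistables in $V$), the vanishing lemma applied piecewise forces this composition to vanish, so $V^1 \subseteq W^1$, and symmetry gives equality. Induction on $V/V^1$ finishes the uniqueness.

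For existence, the key step is constructing a \emph{maximal destabilising subobject} (MDS) of any nonzero $V$: a $Z$-semistable $V^1 \subseteq V$ of slope $\mu^+(V) := \sup\{\mu_Z(U) : 0 \ne U \subseteq V\}$, maximal among such subobjects. First, condition (1) ensures $\mu^+(V)$ is finite and attained by a semistable subobject: starting from $V$, iteratively pass to strictly destabilising subobjects, producing a descending chain of strictly increasing slopes that must terminate in a semistable by (1); refining the choices so the slopes approach the supremum gives attainment. Second, the family of semistable subobjects of slope $\mu^+(V)$ is closed under sums, since $W_1 + W_2$ is a quotient of $W_1 \oplus W_2$ (semistable of slope $\mu^+$) and a subobject of $V$ (hence of slope $\leqslant \mu^+$), forcing equality and semistability. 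To extract a maximum $V^1$, condition (2) is invoked: an infinite strict ascent $W_1 \subsetneq W_2 \subsetneq \cdots$ in this family would yield quotients $V/W_1 \twoheadrightarrow V/W_2 \twoheadrightarrow \cdots$ whose slopes strictly decrease by a seesaw computation on the sequences $0\to W_{i+1}/W_i\to V/W_i\to V/W_{i+1}\to 0$, contradicting (2). Iterating the MDS construction on $V/V^1$ produces successive pieces $V^{i+1}/V^i$ of strictly decreasing slope (any failure would let the MDS of $V/V^{i+1}$ lift back to enlarge $V^{i+1}/V^i$ inside $V/V^i$, contradicting maximality), and the iteration terminates by applying condition (2) once more to $V \twoheadrightarrow V/V^1 \twoheadrightarrow V/V^2 \twoheadrightarrow \cdots$.

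The main obstacle will be extracting the maximum semistable subobject of slope $\mu^+(V)$ in the directed family: a priori this family need not admit filtered colimits in $\mathcal A$, and converting strict ascent into a \emph{strict} descent of slopes on the quotients $V/W_i$ requires a careful seesaw computation. In particular one must rule out the degenerate scenario where the $\mu_Z(V/W_i)$ stagnate, which would force each $W_{i+1}/W_i$ to itself have slope $\mu^+(V)$ and thus contradict the MDS-like maximality being iteratively enforced. Ensuring $\mu^+(V)$ is finite in the first place also needs condition (1) to exclude nested chains of destabilising subobjects with unbounded slopes, so both chain conditions genuinely enter the argument and cannot be dispensed with.
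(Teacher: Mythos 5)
The paper does not reprove this statement; it cites Bridgeland's Proposition~2.4 directly, and your argument is essentially a reconstruction of the proof given there: the vanishing lemma, uniqueness by comparing bottom pieces, and existence via a maximal destabilising subobject (MDS) whose construction and the termination of the resulting filtration both rest on the two chain conditions. So the approach matches the cited source.

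The one place your reasoning drifts is the explanation of why the quotient slopes $\mu_Z(V/W_i)$ cannot stagnate during the MDS extraction. The phrase ``contradict the MDS-like maximality being iteratively enforced'' does not apply at that stage --- there is no maximality to contradict yet; you are in the middle of proving an MDS exists. The correct resolution comes from the seesaw you already set up. If $V$ is not semistable then $\mu_Z(V) < \mu^+(V)$, and from the exact sequence $0\to W_i\to V\to V/W_i\to 0$ with $\mu_Z(W_i)=\mu^+(V)$ one gets $\mu_Z(V/W_i) < \mu_Z(V) < \mu^+(V) \leqslant \mu_Z(W_{i+1}/W_i)$; the seesaw applied to $0\to W_{i+1}/W_i\to V/W_i\to V/W_{i+1}\to 0$ then forces a strict drop $\mu_Z(V/W_i) > \mu_Z(V/W_{i+1})$, and condition~(2) applies. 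In the complementary case $\mu_Z(V)=\mu^+(V)$, every nonzero subobject of $V$ has slope at most $\mu_Z(V)$, so $V$ is already semistable and is its own MDS, and no chain argument is needed. With that case split made explicit your sketch closes up. You should also treat the attainment of $\mu^+(V)$ more carefully than ``refining the choices,'' since the subobjects realising slopes near the supremum need not be nested a priori; this is handled in Bridgeland's Lemma~2.6, but it is a genuine step rather than a remark.
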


\begin{nota}
    \label{nota:HNfiltr}
 If $0=V^0\subsetneq V^1\subsetneq\dots\subsetneq V^\ell=V$ is the HN filtration  of $V$  along  $Z$, we use the notation $\HN Z\theta V=V^i$ whenever $\mu_Z(\frac{V^i}{V^{i-1}} )\geqslant\theta > \mu_Z(\frac{V^{i+1}}{V^{i}})$ with the conventions $\mu_Z(\frac{V^0}{V^{-1}})=+\infty$ and $\mu_Z(\frac{V^{\ell+1}}{V^\ell})=-\infty$. For convenience, we write $\HN Z\theta 0=0$ for all $\theta$.
\end{nota}

Let $\Fil {\mathcal A}$ be the category of $\bbR^\text{opp}$-indexed filtrations of objects of $\mathcal A$. Given a stability condition $Z$, we refer to   the assignment $\HN{Z}{}{-}\colon V\in \mathcal A\mapsto(\HN{Z}{\theta}{V})_{\theta\in\bbR^\text{opp}}\in \Fil{\mathcal A}$ as the \emph{HN functor} along $Z$. Indeed, it has been observed in many settings that under some finiteness assumptions on $Z$ and $\mathcal A$, the assignment   $ \HN{Z}{}{-}$ is well-defined and functorial  -- see \cite{faltings1995mumford}, \cite[Theorem 2.8]{hille2002stable} and \cite[3.1]{andre2009slope}. 

\begin{definition}Assume that a nonzero object $V\in \mathcal A$ has a HN filtration along $Z$. Then its \emph{HN type} along $Z$ is the data of the dimension vectors appearing in its HN filtration. Using Notation \ref{nota:HNfiltr}, we write it 
    \[\HNtype{V}{Z}:=\left(\udim_{\HN{Z}{\theta}V}\right)_{\theta\in\bbR^\text{opp}}.\]
\end{definition}

\subsection{Pullback of stability conditions}\label{subsec:pullback-Z}
Let $F\colon \mathcal A\to \mathcal A'$ be an additive exact functor between abelian categories and let $Z\colon K(\mathcal A')\to\bbC$ be a stability condition over $\mathcal A'$. We further assume that the group homomorphism $K(   \mathcal A) \to K(\mathcal A') $ induced by $F$ is injective. Then, one can define the \emph{pullback} $F^*Z\colon K(\mathcal A)\to\bbC$ of $Z$ by $F$ to be the stability condition over $\mathcal A$ which fits into the following commutative diagram:
\begin{equation}\label{eqn:commute_Z}
    \begin{tikzcd}
K(   \mathcal A) \ar[r,hookrightarrow,"F"]    \ar[dotted,swap,dr,"F^*Z"] & K(\mathcal A') \ar[d,"Z"] \\ 
   & \bbC
\end{tikzcd}
\end{equation}

\begin{lemma}\label{lmm:converse_ss_conserved}
    For every  nonzero object $U$  of $ \mathcal A$, one has $\mu_{F^*Z}(U)= \mu_{Z}(F(U))$ and moreover,
\[F(U)\text{ is $Z$-semistable}\implies U\text{ is ${F^*Z}$-semistable}.\]
\end{lemma}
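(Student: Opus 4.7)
The first claim is essentially unpacking the definition of the pullback. By the commutative diagram (\ref{eqn:commute_Z}), we have $F^*Z([U]) = Z([F(U)])$ for every object $U$ of $\mathcal{A}$. Dividing imaginary parts by real parts yields $\mu_{F^*Z}(U) = \mu_Z(F(U))$ as soon as both slopes are defined, which happens precisely when $U$ (and hence $F(U)$, as we shall see) is nonzero.

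For the implication, suppose $F(U)$ is $Z$-semistable and let $W \hookrightarrow U$ be a nonzero subobject in $\mathcal{A}$. The plan is to transport the inequality $\mu_Z(F(W)) \leqslant \mu_Z(F(U))$ back via the first part of the lemma. Since $F$ is exact, $F$ preserves monomorphisms, so $F(W) \hookrightarrow F(U)$ is a subobject in $\mathcal{A}'$. I will then verify that $F(W) \neq 0$: as $F^*Z$ is a stability condition on $\mathcal{A}$, the nonzero object $W$ satisfies $\Re F^*Z([W]) > 0$, hence $F^*Z([W]) = Z([F(W)]) \neq 0$, which rules out $F(W) = 0$ (otherwise $[F(W)]=0$ in $K(\mathcal{A}')$ and $Z([F(W)])=0$).

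With $F(W)$ a genuine nonzero subobject of $F(U)$, the $Z$-semistability of $F(U)$ gives $\mu_Z(F(W)) \leqslant \mu_Z(F(U))$. Applying the first part of the lemma to both $W$ and $U$, this reads $\mu_{F^*Z}(W) \leqslant \mu_{F^*Z}(U)$, establishing the $F^*Z$-semistability of $U$.

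The main (mild) subtlety is the step ensuring $F(W) \neq 0$, which is where the assumption that $F^*Z$ is a bona fide stability condition on $\mathcal{A}$ intervenes; apart from this, the argument is a direct transfer through the commutative triangle defining $F^*Z$.
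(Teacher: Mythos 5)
Your proof is correct and takes essentially the same approach as the paper: transport a nonzero subobject $W \subsetneq U$ through $F$ using exactness, then apply the slope equality $\mu_{F^*Z}(-) = \mu_Z(F(-))$ on both sides. The only difference is that you make explicit the check that $F(W) \neq 0$ (which the paper leaves implicit), and your justification via $\Re F^*Z([W]) > 0$ is a valid use of the standing assumption that $F^*Z$ is a genuine stability condition.
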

\begin{proof}
    The equality $\mu_{F^*Z}(U)= \mu_{Z}(F(U))$ is a direct consequence of \eqref{eqn:commute_Z}. Assume now that $F(U)$ is $\mu_Z$-semistable and let $0\subsetneq U'\subsetneq U$. By exactness of $F$, we have $F(U')\subset F(U)$ and by semistability $\mu_{F^*Z}(U')=\mu_{Z}(F(U'))\leqslant \mu_Z(F(U))=\mu_{F^*Z}(U).$
\end{proof}

The converse does not always hold, but if it does, $F$ commutes with the HN functor:

\begin{lemma}\label{lmm:preserve-ss-implies-HN} Assume  that  every nonzero $U\in\mathcal A$ has a HN filtration along $ F^*Z$ and satisfies:
\[ U \text{ is }F^*Z\text{-semistable }  \ \Longrightarrow \   F(U) \text{ is }Z\text{-semistable}.\]
    Then  every nonzero object $U\in \mathcal A$ has a HN filtration along $Z$ and  we have:
    \[ \HN{Z}{}{F(U)}=F(\HN{F^*Z}{}{U}).\]
    \end{lemma}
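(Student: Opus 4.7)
The plan is to take the HN filtration of $U$ along $F^*Z$, push it forward through $F$, and check that it satisfies all the axioms of a HN filtration for $F(U)$ along $Z$; then uniqueness (Proposition \ref{prop:exist_HN_abelian}) finishes the job.

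More precisely, let $0=U^0\subsetneq U^1\subsetneq\dots\subsetneq U^\ell=U$ be the HN filtration of $U$ along $F^*Z$, which exists by hypothesis. Since $F$ is exact, applying it to each short exact sequence $0\to U^{i-1}\to U^i\to U^i/U^{i-1}\to 0$ yields short exact sequences
\[0\to F(U^{i-1})\to F(U^i)\to F(U^i/U^{i-1})\to 0.\]
First I would observe that each $F(U^i/U^{i-1})$ is nonzero: the pullback construction of Subsection \ref{subsec:pullback-Z} ensures that $F^*Z$ is a stability condition on $\mathcal A$, so $\Re F^*Z([U^i/U^{i-1}])>0$; since $F^*Z = Z\circ F_*$ on the Grothendieck groups, this forces $[F(U^i/U^{i-1})]\neq 0$ and hence $F(U^i/U^{i-1})\neq 0$. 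This yields a strictly increasing chain $0=F(U^0)\subsetneq F(U^1)\subsetneq\dots\subsetneq F(U^\ell)=F(U)$ of subobjects of $F(U)$ in $\mathcal A'$, with successive quotients $F(U^i/U^{i-1})=F(U^i)/F(U^{i-1})$.

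Next I would verify the two defining properties of Definition \ref{def:HN_abelian}. The semistability of each quotient is immediate from the assumption: each $U^i/U^{i-1}$ is $F^*Z$-semistable by construction of the HN filtration of $U$, and the hypothesis of the lemma then gives that $F(U^i/U^{i-1})$ is $Z$-semistable. The strict decrease of slopes follows directly from Lemma \ref{lmm:converse_ss_conserved}, which gives
\[\mu_Z\bigl(F(U^i/U^{i-1})\bigr)=\mu_{F^*Z}(U^i/U^{i-1}),\]
and the right-hand side is strictly decreasing in $i$ by definition of the HN filtration of $U$. Thus $0=F(U^0)\subsetneq\dots\subsetneq F(U^\ell)=F(U)$ is a HN filtration of $F(U)$ along $Z$, and it is unique by Proposition \ref{prop:exist_HN_abelian}. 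The pointwise equality $\HN{Z}{\theta}{F(U)}=F(\HN{F^*Z}{\theta}{U})$ for every $\theta\in\bbR$ is then read off from Notation \ref{nota:HNfiltr}, using once more that the slopes $\mu_Z(F(U^i/U^{i-1}))$ and $\mu_{F^*Z}(U^i/U^{i-1})$ agree so that the same index $i$ is selected on both sides.

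I do not expect serious difficulties here, as most of the work is done by Lemma \ref{lmm:converse_ss_conserved}, the exactness of $F$, and the uniqueness part of Proposition \ref{prop:exist_HN_abelian}. The only subtle point is checking that $F$ cannot collapse the strict inclusions of the HN filtration of $U$, which is where the injectivity of $K(\mathcal A)\hookrightarrow K(\mathcal A')$ (built into the definition of $F^*Z$) is essential; once this is clear the rest is bookkeeping.
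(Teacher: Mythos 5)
Your proof is correct and follows essentially the same route as the paper: apply $F$ to the $F^*Z$-HN filtration of $U$, use the hypothesis and Lemma \ref{lmm:converse_ss_conserved} to verify the semistability and strict slope decrease of the quotients, and conclude by uniqueness of HN filtrations. You spell out one detail the paper leaves implicit (that the inclusions $F(U^{i-1})\subsetneq F(U^i)$ remain strict because $\Re F^*Z([U^i/U^{i-1}])>0$ forces $F(U^i/U^{i-1})\neq 0$), which is a worthwhile clarification of the paper's terse appeal to ``the exactness of $F$.''
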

\begin{proof}
Let $U\in\mathcal A\setminus\set 0$ and let $0=U^0\subsetneq U^1\subsetneq \dots\subsetneq U^\ell=U$ be its HN filtration along $F^*Z$.  By Lemma \ref{lmm:converse_ss_conserved}, the modules $( F(U^{i}/U^{i-1}))_{1\leqslant i\leqslant  \ell}$ have decreasing $Z$-slopes and by assumption they are $Z$-semistable. The exactness of $F$ and the uniqueness of  HN filtrations ensure that $0= F(U^0)\subsetneq F(U^1)\subsetneq \dots\subsetneq  F(U^\ell)= F(U)$ is   the HN filtration of $ F(U)$ along $Z$. 
\end{proof}

\subsection{HN filtrations of persistence modules over finite posets}\label{subsec:HN-finite-poset} 
We fix a finite poset  $P$ and consider the abelian category $\Pers P$. Then, every stability condition satisfies the assumptions of Proposition \ref{prop:exist_HN_abelian} and is of the form
\begin{equation}\label{eqn:stab_cond_quiver}  Z_{\alpha,\beta}(\cl V)=\sum_{p\in P}(\beta(p) +\sqrt{-1}\alpha(p) )\dim V_p\end{equation}
for some $\alpha\colon P\to \bbR$ and $\beta\colon P\to \bbR_{>0}$.
\begin{rem}\label{rem:quiver_poset}
    The finite poset $P$ can be seen as a quiver $\mathrm{Hasse}(P)$ whose vertices are the elements of $P$ and whose edges are given by the covering relation in $P$. Namely, $\mathrm{Hasse}(P)$ is the  directed simple graph such that given $p,{p'}\in P$, there is an edge from $p$ to ${p'}$ if and only if $p< {p'}$ and there is no ${p''}\in P$ such that $p<{p''}<{p'}$. 

    Let $V$ be a quiver representation and for a path $\gamma$ in the quiver let $V_\gamma$ denote the  composition of the maps of $V$ along $\gamma$.  The representation $V$ is said to be \emph{equalised} if for every pair of paths $\gamma,\gamma'$ with the same sources and targets, we have $V_\gamma=V_{\gamma'}$.

    Then, the $P$-persistence modules can be identified with the equalised representations of $\mathrm{Hasse}(P)$. These commutativity relations are conserved by taking subrepresentations. By Lemma \ref{lmm:preserve-ss-implies-HN}, it is equivalent to compute the HN filtrations of $V\in \Pers P$ as a $P$-persistence module and as a  representation of the quiver without relation $\mathrm{Hasse}(P)$. 

\end{rem}

Let $p\in P$, the \emph{skyscraper weight} at $p$ is the function $\delta_p\colon P\to \bbR$ which sends $p$ to 1 and ${p'}\neq p$ to $0$. We fix a  function $\beta\colon P\to \bbR_{>0}$, the \emph{skyscraper invariant} of $V\in \Pers P\setminus\set 0$ is the data of 
\begin{equation}
    \label{eqn:skyscraper}(\HNtype{V}{Z_{\delta_p,\beta}})_{p\in P}.
\end{equation}

Note that replacing $\HNtype{V}{Z_{\delta_p,\beta}}$ by $(\rho_{\HN{Z}{\theta}{V}})_\theta$ in \eqref{eqn:skyscraper} does not create a more informative invariant. Indeed, for every $\theta>0$, the maps between nonzero spaces of $\HN{Z}{\theta}{V}$ are all surjective    \cite[Proposition 3.3]{HN_discr}.

\section{HN filtrations of  persistence modules}%
\label{sec:HN-exists}

The two main goals of this Section are firstly, to find a framework where HN filtrations of nonzero discretisable $\bbR^n$-persistence modules exist, and secondly, to  define a continuous version of  the skyscraper invariant. 

We first study stability conditions of discretisable persistence modules over an upper semilattice $Q$. Since HN  filtrations do not always exist, even when $Q=\bbZ$ or $\bbR$ -- see Example \ref{ex:reasons_FG} -- we restrict ourselves to the family $\calZ^\text{eval}(Q)$ of  stability conditions whose imaginary part is induced by the Dirac delta functions. Given $Z\in\calZ^\text{eval}(Q)$,  we prove that every discretisable $Q$-persistence module  has a HN filtration along $Z$ which can be computed in any fine enough discretisation. 

Focusing on the case where $Q=\bbZ^n$ or $\bbR^n$, we consider a larger family $\calZ(Q)$ containing all the stability conditions induced by  step functions and we establish the existence of HN filtrations along any $Z\in \calZ(Q)$. More precisely, we show that the HN functor $\HN{Z}{}{-}$ along $Z\in\calZ(Q)$ is well-defined and  commutes with the induction functor $G_*$ of a fine enough finite or infinite grid function $G$. Finally,  we define in this setting the \emph{HN filtered rank invariant} along $Z$ which generalises and gives a continuous version of  the skyscraper invariant defined in \eqref{eqn:skyscraper}. 

\subsection{Stability conditions for persistence modules}%
    Let $Q$ be an upper semilattice.

\begin{lemma}
    The assignment $\udim\colon V\mapsto \udim_V $ induces an isomorphism from the Grothen\-dieck group  of discretisable $Q$-persistence modules $K(\Persfp Q)$ to the  discretisable functions  $\mathcal F(Q,\bbZ)$ from $Q$ to $\bbZ$. 
\end{lemma}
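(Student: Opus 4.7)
The plan is to show that $\udim$ is additive on short exact sequences---and hence induces a well-defined group morphism $\phi\colon K(\Persfp Q)\to \mathcal F(Q,\bbZ)$---and then to verify bijectivity by explicit constructions on both sides. Additivity is immediate since exact sequences of $\K$-vector spaces split pointwise. I would also note that $\mathcal F(Q,\bbZ)$ is an abelian group under pointwise addition: if $a_1,a_2$ are discretised respectively by inclusions $f_i\colon P_i\hookrightarrow Q$ with $P_i$ finite, Remark~\ref{rem:adapted_to_incl} provides a common refinement $P_1\cup P_2\hookrightarrow Q$ that discretises $a_1\pm a_2$.

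For surjectivity, fix $a\in\mathcal F(Q,\bbZ)$ with an adapted inclusion $f_0\colon P_0\hookrightarrow Q$. The join-closure of $P_0$ inside $Q$ has at most $2^{|P_0|}-1$ elements, so, replacing $f_0$ by the inclusion $f\colon P\hookrightarrow Q$ of this join-closure, we may assume $P$ is a finite upper semilattice; the identity $\Gfloor{\Gfloor{q}{f}}{f_0}=\Gfloor{q}{f_0}$ (which follows from monotonicity of floors and $P_0\subseteq P$) shows that $a$ remains $f$-discretisable. Writing $a=f_*\tilde a$ and splitting $\tilde a=\tilde a_+-\tilde a_-$ into positive and negative parts, set $U_\pm := \bigoplus_{p\in P}\K_{\{p\}}^{\tilde a_\pm(p)}\in\Pers P$. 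Then $\udim_{U_\pm}=\tilde a_\pm$, and \eqref{eqn:dim_commute_pushforward} yields $\phi([f_*U_+]-[f_*U_-])=a$.

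For injectivity, suppose $\phi([V]-[W])=0$, so $\udim_V=\udim_W$. Using the same enlargement trick on the union of any discretising posets for $V$ and $W$, I pick a single inclusion $f\colon P\hookrightarrow Q$ into a finite upper semilattice adapted to both modules, so that $V\simeq f_*V'$ and $W\simeq f_*W'$ for some $V',W'\in\Pers P$. Since the inclusion $f$ is fully faithful, the unit of $(f_*,f^*)$ is a natural isomorphism and restriction gives $\udim_{V'}=f^*\udim_V=f^*\udim_W=\udim_{W'}$. The category $\Pers P$ is finite-length with simple objects exactly the one-dimensional skyscrapers $\K_{\{p\}}$ for $p\in P$ (the support of any simple reduces to a single point by picking a maximal element of its support, which generates a non-zero submodule), so Jordan--H\"older gives $[V']=\sum_{p\in P}(\dim V'_p)\,[\K_{\{p\}}]=[W']$ in $K(\Pers P)$. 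Since $f_*$ is exact by Lemma~\ref{lmm:abelianity-semilattice}(ii), it descends to a homomorphism $K(\Pers P)\to K(\Persfp Q)$, yielding $[V]=[W]$.

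The main delicate point is ensuring that the discretising poset can always be upgraded to a finite upper semilattice, since Lemma~\ref{lmm:abelianity-semilattice}(ii)---needed in the injectivity step to lift Grothendieck-class equalities through $f_*$---requires this extra structure; everything else reduces to standard facts about finite-length abelian categories of equalised quiver representations.
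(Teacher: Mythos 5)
Your proof is correct and follows essentially the same route as the paper's: reduce both surjectivity and injectivity to a common discretisation by a finite upper semilattice $P\hookrightarrow Q$ (via the join-closure trick), handle the finite case directly, and transport back through the exact functor $f_*$. The only difference is one of granularity: the paper compresses the finite-case injectivity to "clearly injective" and surjectivity to a one-clause appeal to \eqref{eqn:dim_commute_pushforward}, whereas you spell these out via the Jordan--H\"older argument in $\Pers P$ and the explicit skyscraper-sum construction of a preimage, which is a sound unpacking of what the paper leaves implicit.
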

\begin{proof}
    If $0\to V\to V'\to V''\to 0$ is exact in $\Persfp Q$, then $\udim_{V'}-\udim_V-\udim_{V''}=0$ so by \eqref{eqn:dim_commute_pushforward} the homomorphism $\udim\colon K( \Persfp Q)\to \mathcal F(Q,\bbZ)$ is well-defined and surjective. Moreover, when $Q$ is finite, this homomorphism is clearly injective. Assume now that $Q$ is infinite and choose $ V\in K(\Persfp Q)$ in the kernel of $\udim$. Then, by  Remark \ref{rem:adapted_to_incl} and  Lemma \ref{lmm:abelianity-semilattice}(ii), there is an inclusion of upper semilattices  $f\colon P\hookrightarrow Q$ with $
    P$ finite and $ U\in K(\Persfp P)$ such that $ V=f_* U$. So, by Equation \eqref{eqn:dim_commute_pushforward}, we have
    \[0=f^*\udim ( V)=\udim(f^*f_* U)=\udim( U). \]
   By the finite case, $ U=0$ and finally $ V=f_*U=0$ in $K(\Persfp Q)$.
\end{proof}

 Hence, any stability condition $Z$   factors through $\udim$, and is uniquely determined by a linear form $\alpha$ and a strictly positive linear form $\beta$ from the real vector space $ \mathcal F(Q,\bbZ)\otimes \bbR\cong \mathcal F(Q,\bbR)$. More precisely, any stability condition is of the form 
\begin{equation}\label{eq:stab_cond_gen}
    Z_{\alpha,\beta}\colon \cl V\mapsto \beta(\udim_V)+\sqrt{-1}\alpha(\udim_V).
\end{equation}

Note that working in $\Persfp Q$ and not $\Pers Q$ is essential to guarantee the existence of a strictly positive linear form $\beta$ over the dimension vectors of $\bbR^n$-persistence modules. Henceforth, we use the above notation $Z_{\alpha,\beta}$ to denote the stability condition induced by the linear forms $\alpha$ and $\beta$.

Let $f\colon P\hookrightarrow Q$ be an inclusion of  upper semilattices. The induction $f_*$ is exact and induces an injective group homomorphism $K(\Persfp{ P})\to K(\Persfp{Q})$. We denote by $f^*Z$ the pullback  of $Z$ by $f_*$ which is defined by diagram \eqref{eqn:commute_Z} with $F=f_*$. 

Moreover, when $P$ is finite  and $Z:=Z_{\alpha,\beta}$ is expressed like in \eqref{eq:stab_cond_gen}, we write $ f^*Z:=Z_{f^*\alpha,f^*\beta}$ like in \eqref{eqn:stab_cond_quiver}. The functions $f^*\alpha,f^*\beta\colon  P\to \bbR$ are defined for $p\in P$ by
\[f^*\alpha(p):= \alpha(\ind{\cub f(p)}) \qquad\text{ and }\qquad f^*\beta(p):=\beta(\ind{\cub f(p)})\]
where $\ind{\cub f(p)}\in\mathcal F( Q,\bbZ)$ denotes the indicator function of $\cub f(p)\subset Q$.

The notions of slope, (semi)stability and HN filtrations are defined in $\mathcal A:=\Persfp {Q}$ as per Subsection \ref{subsec:HN_abelian}. Importantly, if we restrict ourselves to $f$-discretisable $Q$-persistence modules where $f$ is the inclusion of a finite upper semilattice $P$ into $Q$, then all the slopes can be computed in $\Pers P$ using the same framework as in  Subsection \ref{subsec:HN-finite-poset}.

\subsection{Existence of HN filtrations of persistence modules} The following examples in the single-parameter setting illustrate some reasons why HN filtrations of persistence modules do not always exist: 

\begin{ex}\label{ex:reasons_FG} \
 \begin{enumerate} 
    \item   Consider a stability condition $Z$ over $\bbR$ of the form 
\[Z\colon \cl V \in K(\Persfp \bbR)\mapsto  \int_{\bbR} (b+\sqrt{-1}a) \udim_V\in\bbC\]
where $a,b\colon\bbR\to\bbR_{>0}$ are integrable, $a_{|[0,1]}$ is increasing and $b_{|[0,1]}$ is constant. Then the set of $Z$-slopes attained by  the submodules of $\Int[0,1)$ does not have a maximum. More precisely, the submodules  of $\Int[0,1)$ can be ordered into  an infinite decreasing family $(\Int [t,1))_{t\in[0,1)}$ in $\Persfp {\bbR}$ with increasing $Z$-slopes. The module $\Int [0,1)$ has no $Z$-semistable submodule and hence no HN filtration. 
\item[(1')] A similar phenomenon occurs with  a stability condition $Z=Z_{\alpha,\beta}$  over $\bbR$ whose imaginary part is induced by $\alpha\colon f\in \mathcal F(\bbR,\bbR)\mapsto -f(0)\in\bbR$. The quotients of $\Int[0,1)$ can be ordered into an infinite increasing family $(\Int [0,t))_{t\in(0,1]}$  with increasing $Z$-slopes. Since the filtration   $0\subsetneq \Int(0,1)\subsetneq \Int[0,1)$ does not lie in $\Persfp \bbR$, the spread module $\Int[0,1)$ has no HN filtration along $Z$ in $\Persfp \bbR$. 

Note that   an abelian subcategory of $\Pers {\bbR}$ which contains  the inclusions  $\Int(p,p')\subset \Int[p,p')$   for  $p<p'\in\bbR$, would also contain $\Int [p,p')/\Int(p,p')=\Int[p,p]$, and the condition $  \beta(\K_{\set p})>0$ would exclude many intuitive choices for $\beta$.
\item Let $Z=Z_{\alpha,\beta}$ be a stability condition over $\bbZ$ whose imaginary part is induced by $\alpha\colon f\in \mathcal F(\bbZ,\bbR)\mapsto \lim_{+\infty} f\in\bbR$. The spread module $\Int[0,+\infty)$ has no HN filtration along $Z$ because its submodules can be ordered into an infinite decreasing family $(\Int[t,+\infty))_{t\in\bbZ_{\geqslant0}}$ of increasing $Z$-slope.
\end{enumerate}
\end{ex}

We will now consider certain stability conditions such that the induction functor preserves semistability. We will then use Lemma \ref{lmm:preserve-ss-implies-HN}  to guarantee the existence of HN filtrations.

Let $Q$ be an upper semilattice. The \emph{evaluation form} at $q\in Q$ is the linear form
\[\delta_{ q}:f\in\mathcal F(Q,\bbR)\longmapsto f(q)\in\bbR.\]
Given an inclusion $f$ of upper semilattices, we denote by $\calZ^\text{eval}(f)$ the set of stability conditions $Z:=Z_{\alpha,\beta}$ over $Q$ such that  $\alpha$ is a nonnegative linear combination of the evaluation forms $(\delta_q)_{q\in \Img f}$. If $Z\in\calZ^\text{eval}(f)$, we say that $f$ is \emph{adapted} to $Z$ or equivalently that $Z$ is $f$-\emph{discretisable}. We denote by $\calZ^\text{eval}(Q)$ the union of $\calZ^\text{eval}(f)$ over all inclusions $f $ of a finite semilattice into $Q$. 

\begin{prop}\label{prop:HN_existence_eval}
Let $f$ be the inclusion of a finite upper semilattice $P$ into $Q$ and assume that $Z\in\calZ^\text{eval}(f)$. Then, for any nonzero $P$-persistence module $U$, we have 
    \[ U \text{ is }f^*Z\text{-semistable }  \ \Longrightarrow \  f_*U \text{ is }Z\text{-semistable}.\]
\end{prop}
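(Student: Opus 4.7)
The plan is to reduce the $Z$-semistability of $f_*U$ in $\Persfp Q$ to the given $f^*Z$-semistability of $U$ via the adjunction $(f_*,f^*)$, crucially using that $\alpha$ is supported on $\Img f$.

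Let $V'\subsetneq f_*U$ be a nonzero proper subobject in $\Persfp Q$. The first step is to produce a canonical subobject $f_*f^*V'\hookrightarrow V'$ that descends to the $P$-level. Applying the exact restriction $f^*$ yields a submodule $f^*V'$ of $f^*f_*U$, which is isomorphic to $U$ since $f$ is fully faithful. Applying the exact induction $f_*$ (Lemma \ref{lmm:abelianity-semilattice}(ii)) gives a monomorphism $f_*f^*V'\hookrightarrow f_*U$. A pointwise check, using that $(f_*W)_q=W_{\Gfloor q f}$ and that the internal map of $V'$ from $V'_{f(\Gfloor q f)}$ to $V'_q$ is the restriction of the identity on $U_{\Gfloor q f}$, shows that this monomorphism factors through $V'\hookrightarrow f_*U$, so $f_*f^*V'$ is a subobject of $V'$ in $\Persfp Q$.

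The key step compares $\mu_Z(V')$ and $\mu_Z(f_*f^*V')$. Set $W:=V'/f_*f^*V'\in\Persfp Q$. By construction, $\udim_W\geqslant 0$ pointwise, and $\udim_W$ vanishes on $\Img f$ since $(f_*f^*V')_{f(p)}=V'_{f(p)}$ for all $p\in P$. Because $\alpha$ is a nonnegative combination of $(\delta_q)_{q\in\Img f}$, this forces $\Im Z(W)=0$, hence $\Im Z(V')=\Im Z(f_*f^*V')\geqslant 0$. Strict positivity of $\beta$ on dimension vectors gives $\Re Z(V')\geqslant \Re Z(f_*f^*V')$. Combining, $\mu_Z(V')\leqslant \mu_Z(f_*f^*V')$ whenever $f_*f^*V'\neq 0$.

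To conclude, apply Lemma \ref{lmm:converse_ss_conserved} to the exact functor $f_*$: it yields $\mu_{f^*Z}(f^*V')=\mu_Z(f_*f^*V')$ and $\mu_{f^*Z}(U)=\mu_Z(f_*U)$. The $f^*Z$-semistability of $U$ then gives $\mu_Z(f_*f^*V')\leqslant\mu_Z(f_*U)$, and combining with the previous step yields the desired $\mu_Z(V')\leqslant \mu_Z(f_*U)$. The degenerate case $f^*V'=0$ must be handled separately: then $\Im Z(V')=0$, so $\mu_Z(V')=0$, while $\mu_Z(f_*U)\geqslant 0$ because $\alpha$ is nonnegative on dimension vectors. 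The main obstacle is precisely that an arbitrary subobject of $f_*U$ need not come from $\Pers P$; the evaluation hypothesis on $\alpha$ circumvents this by making the extra piece $W=V'/f_*f^*V'$ invisible to $\Im Z$, so replacing $V'$ by $f_*f^*V'$ can only increase the slope.
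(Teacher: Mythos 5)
Your proof is correct and takes essentially the same route as the paper's: construct the pixelisation $f_*f^*V'$ as a canonical subobject of the given subobject $V' \subset f_*U$ via a pointwise injectivity argument, observe that the evaluation hypothesis on $\alpha$ makes $\Im Z$ blind to the cokernel $V'/f_*f^*V'$ while the strict positivity of $\beta$ forces $\Re Z(f_*f^*V') \leqslant \Re Z(V')$, then transfer to $\Pers P$ via $f^*V' \subset f^*f_*U \simeq U$ and Lemma \ref{lmm:converse_ss_conserved}. The only cosmetic differences are that you phrase the $\Im Z$-comparison through the quotient $V'/f_*f^*V'$ rather than evaluating directly on points of $\Img f$, and you split the degenerate case on $f^*V'=0$ rather than on $\alpha(\udim_{V'})=0$; both variants are sound.
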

\begin{proof}Let $W$ be a submodule of $f_*U$, we first prove that the pixelisation $f_*f^*W$ is a submodule of $W$.  For $q\in Q$, consider the  linear map $\phi_q:=W_{f(\Gfloor qf)\leqslant q}$. Its domain is $(f^*W)_{\Gfloor qf} =(f_*f^*W)_q$ and it is injective as it is the restriction of the isomorphism $(f_*U)_{f(\Gfloor qf)\leqslant q}$. These maps induce a map of $Q$-persistence modules $\phi_\bullet\colon f_*f^*W\hookrightarrow W$ because for $q\leqslant q'$ in $Q$, we have,
\[\phi_{q'}\circ (f_*f^*W)_{q \leqslant {q'}}=\phi_{q'}\circ W_{f(\Gfloor qf) \leqslant f(\Gfloor {q'}f)}=W_{f(\Gfloor qf) \leqslant {q'}}= W_{q\leqslant {q'}}\circ \phi_{ q}.\]

   Assume now that $U\in \Persfp P$ is $f^*Z$-semistable and let $0\neq W\subset f_*U$. We prove that $\mu_Z(W)\leqslant \mu_Z(U)$. Since $\mu_Z(V)\geqslant0$, we can restrict ourselves to the nontrivial case where $\alpha(\udim_W)>0$.   Given $p\in P$, we have $\delta_{{f(p)}}(\udim_{f_*f^*W})=\dim(f^*W)_p=\delta_{{f(p)}}(\udim_W)$, whence $\alpha(\udim_{f_*f^*W})=\alpha(\udim_W)>0$. In particular, $f_*f^*W$ is nonzero.  Since $f_*f^*W$ is a submodule of $W$, we have $\udim_{f_*f^*W}\leqslant \udim_{W}$ and  $\mu_{Z}(f_*f^*W)\geqslant \mu_Z(W)$ because  $ \beta$ is positive. But, by applying the restriction functor, $f^*W\subset f^*f_*U\simeq U$ so by semistability of $U$ and Lemma \ref{lmm:converse_ss_conserved}, we finally have
   \[\mu_Z(W)\leqslant \mu_Z(f_*f^*W)=\mu_{f^*Z}(f^*W) \leqslant\mu_{f^*Z}(U)=\mu_Z(f_*U). \]
\end{proof}

We now restrict ourselves to the case where $Q$ is a cube of either $\bbR^n$ or $\bbZ^n$ and state an analog of Proposition \ref{prop:HN_existence_eval} for a larger family of stability conditions.

\begin{definition}\label{def:conditions-Z-discr}
Let $G$ be a grid function over $Q$. We denote by $\calZ^\text{step}(G)$ the set of stability conditions $Z\colon K(\Persfp Q)\to\bbC$ of the form
    \[
    Z([V]) = \int_Q (b+\sqrt{-1}a) \udim_V 
    \]
where $a,b\colon Q\to \bbR$ are  discretisable by an extension of $G$ and  
\begin{itemize}
    \item  $a$ is null on all the unbounded cubes $\cub G(x) $ delimited by $G$
    \item  $b$ is strictly positive and integrable on $Q$
\end{itemize}
\end{definition}

Let $c$ be a nonempty bounded cube of $Q$. The dimension  of $c$ is the number $0\leqslant\dim c\leqslant n$ of coordinates over which the projection of $c$ has nonempty interior. We define the positive linear form $\delta_c$ which sends $f\in\mathcal F(Q,\bbR)$ to its average over $c$. Namely, 
\begin{equation}\label{eqn:deltac}\delta_c(f):=\frac{\int_cf}{\int_c1}\end{equation}
where the integral is $\dim c$-dimensional. If $c=\set x$ is a point of $Q$, then $\delta_{c}$ is the evaluation form  at $x$, which is denoted $\delta_x$.  We now slightly extend the family considered in Definition \ref{def:conditions-Z-discr} to allow stability conditions such that $\Img Z([V])=\delta_c(\udim_V)$ with $\dim c\neq n$.

  \begin{definition}\label{def:conditions-Z}
      Let $G\colon P\hookrightarrow Q$ be a grid function. We denote by $\calZ(G)$ the set of stability conditions over $\Persfp Q$ of the form
    \[
    [V]\mapsto Z([V])+ \sqrt{-1} \sum_{c\subset Q}\lambda_c\delta_c(\udim_V)
    \]
    where 
    \begin{itemize}
        \item  $Z$ lies in  $\calZ^\text{step}(G)$
        \item  the sum is  over  the faces of the bounded cubes  $(\cub G(x))_{x\in P}$ delimited by $Q$
        \item the $\lambda_c$ are nonnegative coefficients
    \end{itemize}
  \end{definition}
  \begin{rem}\label{rem:conditions-Z-as-delta-c}
If $Z=Z_{\alpha,\beta}$ lies in $ \calZ^\text{step}(G)$, we can write $\alpha$ as a finite  linear combination of $(\delta_{\cub { G}(x)})_{x\in  P}$ and $\beta$ is a countable strictly positive linear combination of $(\delta_{\cub {\widetilde G}(x)})_{x\in \widetilde P}$ for some extension $\widetilde G\colon \widetilde P\hookrightarrow Q$ of $G$.
  \end{rem}

   We use the same vocabulary and notations with $\calZ$ than with $\calZ^\text{eval}$. Namely, $\calZ(Q):=\bigcup_{G\text{ finite}} \calZ(G)$ and we say that $Z$ is $G$-\emph{discretisable} or that $G$ is \emph{adapted} to $Z$ when $Z\in\calZ(G)$.

\begin{rem}\label{rem:Z_G-Zn}
In the case where  $Q$ is a  cube of $\bbZ^n$, the  conditions of Definition  \ref{def:conditions-Z} can be simplified. Indeed, for each cube $c$ of $Q$, the function $\ind c$  is $G$-discretisable, so $\calZ^\text{step}(G)=\calZ(G)$. Moreover, $\id_Q$ is a refinement of any grid function over $Q$.  As a consequence,  a stability condition $Z$ over $Q$ lies in $\calZ(Q)$ if and only if it is of the form
 \[Z_{\alpha,\beta}([V])=\sum_{y\in Q}(\beta(y)+\sqrt{-1}\alpha(y))\dim V_y\]
 where $\alpha\colon Q\to \bbR$ is compactly supported and $\beta\colon Q\to \bbR_{>0}$ is integrable. Furthermore, any grid function $G\colon P\hookrightarrow Q$ where $Q$ is a cube of $\bbR^n$ or $\bbZ^n$ induces an assignment
 \[\left\{\begin{array}{ccc}
     \calZ(Q)&\to&\mathcal{Z}( P)\\
     Z&\mapsto & G^*Z
 \end{array}
\right.\] \end{rem}

\begin{prop}\label{prop:induction-step-fun-conserves-ss} Let $G\colon P\hookrightarrow Q$ be a finite grid function  and let $Z\in \calZ(G)$. Then for every nonzero $P$-persistence module $U$:
\[ U \text{ is }G^*Z\text{-semistable }  \ \Longrightarrow \  G_*U \text{ is }Z\text{-semistable}.\]
\end{prop}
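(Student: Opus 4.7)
The plan is to adapt the proof of Proposition \ref{prop:HN_existence_eval}. For any nonzero submodule $W\subset G_*U$, I would construct the pixelisation $G_*G^*W\hookrightarrow W$ exactly as in the previous proposition: on each fibre $\cub G(x)$ the internal maps of $G_*U$ are isomorphisms, so the maps $\phi_q:=W_{G(\Gfloor{q}{G})\leqslant q}$ are injective for $q\in\cub G(x)$ and assemble into a monomorphism in $\Persfp Q$. Applying the restriction functor $G^*$ and using the unit isomorphism $G^*G_*U\simeq U$ yields $G^*W\subset U$; combining the semistability of $U$ with the pullback identity $\mu_Z(G_*V)=\mu_{G^*Z}(V)$ then gives
\[
\mu_Z(G_*G^*W)\;=\;\mu_{G^*Z}(G^*W)\;\leqslant\;\mu_{G^*Z}(U)\;=\;\mu_Z(G_*U).
\]

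The argument of Proposition \ref{prop:HN_existence_eval} exploited the equality $\alpha(\udim_W)=\alpha(\udim_{G_*G^*W})$, which holds when $\alpha$ is a nonnegative combination of point-evaluations at $G(P)$. This equality fails for general $Z\in\calZ(G)$ because $\alpha$ now also involves integrals $\int_Q a\cdot\udim$ over fibres and face-averages $\delta_c$, both sensitive to the values of $\udim_W$ away from the grid points. To circumvent this, I would introduce the $G$-discretisable envelope $\widetilde W:=G_*U'$ of $W$ inside $G_*U$, where $U'\subset U$ is defined fibrewise by $U'_x:=\bigcup_{q\in\cub G(x)}W_q\subset U_x$. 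This union is well-defined (the fibre maps of $G_*U$ are identities, so the subspaces $W_q$ form an increasing family) and, after a routine check that $U$'s maps send $U'_x$ into $U'_{x'}$ for $x\leqslant x'$, constitutes a submodule of $U$. By construction $W\subset\widetilde W$, and applying semistability of $U$ to $U'\subset U$ yields $\mu_Z(\widetilde W)=\mu_{G^*Z}(U')\leqslant\mu_Z(G_*U)$.

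It then remains to establish $\mu_Z(W)\leqslant\mu_Z(\widetilde W)$, which by the seesaw property for the short exact sequence $0\to W\to\widetilde W\to\widetilde W/W\to 0$ is equivalent to $\mu_Z(\widetilde W/W)\geqslant\mu_Z(\widetilde W)$. The quotient $\widetilde W/W$ has dimension vector $d_x^{\max}-\udim_W$ on each fibre $\cub G(x)$, where $d_x^{\max}:=\dim U'_x$, and vanishes precisely at those points of each fibre where $W$ attains its fibrewise maximum. Using the decomposition of $\alpha$ from Remark \ref{rem:conditions-Z-as-delta-c} and Definition \ref{def:conditions-Z} as $\sum_{x}a_x\vol(\cub G(x))\,\delta_{\cub G(x)}+\sum_c\lambda_c\delta_c$ with $\lambda_c\geqslant 0$, I would attempt a fibrewise analysis balancing the signed integral contributions against the nonnegative face contributions to produce the desired slope inequality. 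The main obstacle is precisely this final fibrewise estimate: since the coefficients $a_x$ can have either sign, one must carefully exploit the fact that $\widetilde W/W$ misses the fibrewise maxima of $W$ in order to control the potentially destabilising integral term against the nonnegative contributions from the $\lambda_c$'s.
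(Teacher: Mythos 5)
You correctly identify why the argument of Proposition \ref{prop:HN_existence_eval} does not carry over, and the envelope $\widetilde W := G_*U'$ is a legitimate $G$-discretisable submodule of $G_*U$ containing $W$ (though one should note $\widetilde W = G_*G^*W$, since $(G^*W)_x$ is exactly the directed union $\bigcup_{q\in\cub G(x)}W_q$ under the fibrewise identifications). But the step you flag as an ``obstacle'' is a genuine gap, and it cannot be closed: the inequality $\mu_Z(W)\leqslant\mu_Z(\widetilde W)$ simply fails in general, even in one dimension and with no face terms $\lambda_c\delta_c$ present. Take $Q=\bbR$, $P=\set{0,1,2}$, $G$ with image $\set{0,1,2}$, $U=(\K\to\K\to\K)$ the identity representation, and $W=\K_{[t,\infty)}$ for some $t\in(0,1)$, so that $\widetilde W=\K_{[0,\infty)}=G_*U$. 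Choose $\Im Z$ induced by a step function $a$ with $a<0$ on $[0,1)$ and $a\gg 0$ on $[1,2)$ (null elsewhere). Then $\widetilde W/W=\K_{[0,t)}$ has $\Im Z(\udim_{\widetilde W/W})=a_{|[0,1)}\cdot t<0$, hence $\mu_Z(\widetilde W/W)<0<\mu_Z(\widetilde W)$, which by the seesaw identity you invoke gives $\mu_Z(W)>\mu_Z(\widetilde W)$. The envelope is therefore the \emph{wrong} approximant on exactly the cubes where $a$ is negative; the $\lambda_c\geqslant 0$ terms cannot rescue the estimate because the stability condition may have none.

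The paper avoids this by not committing to a single discretisation of $W$. Working with a refinement $H$ of $G$ adapted to $W$, it sets up an induction on the number $N_H$ of extra grid hyperplanes and, for each excess hyperplane at $(i,p)$, deforms $H$ continuously via the one-parameter family $H^\bullet$ affine at $(i,p)$. Lemma \ref{lmm:affine_at_grid_fun} shows the slope $m(x)=\mu_Z(H^x_*X)$ is monotonic on the open interval, so its maximum is attained at one of the two endpoints $x^-$ or $x^+$, i.e.\ by either extending $W$ (your envelope direction) or shrinking it toward the next grid line. Your approach corresponds to always choosing $x^-$, but the monotonicity argument only guarantees that \emph{one} of the two endpoints does not decrease the slope, and the correct one depends on the sign of $\Im Z$ on the relevant cube. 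This sign-dependent, hyperplane-by-hyperplane reduction is the content you would need to supply to make the proof go through.
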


We defer the proof of Proposition \ref{prop:induction-step-fun-conserves-ss} to the next Subsection. We now state our  existence theorem for HN filtrations of discretisable persistence modules:

\begin{theorem}\label{thm:existence_HN}Assume that either

\begin{enumerate}[label=(\roman*)]
    \item $Q$ is an upper semilattice and $Z\in\calZ^\text{eval}(Q)$
    \item  $Q$ is a cube of $\bbR^n$ or $\bbZ^n$ and $Z\in\calZ(Q)$
\end{enumerate}

Then, every nonzero discretisable  $Q$-persistence module $V\in\Persfp Q$ has a 
unique HN filtration along $Z$. Moreover, for each $f\colon P\hookrightarrow Q$ 
adapted to $Z$ with $P$ finite, the following diagram is well-defined and commutative:
\begin{equation}
    \label{eqn:dgm-HN-e}
 \begin{tikzcd}
        \Pers{P}\ar[rr,"\HN{ f^*Z}{}{-}"]  \ar[d," f_*"]&& \Fil{\Pers{  P}} \ar[d," f_*"]\\
        \Persfp{Q}\ar[rr,"\HN{Z}{}{-}"]  &&\Fil{ \Persfp{Q}}\\ 
    \end{tikzcd}
\end{equation}
where $f$ is either (i) an inclusion of upper semilattices  or (ii) a  grid function.
\end{theorem}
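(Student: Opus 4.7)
The plan is to reduce the existence of HN filtrations over $Q$ to the finite poset setting of Subsection \ref{subsec:HN-finite-poset}, and then to transfer the filtration along the induction functor using Lemma \ref{lmm:preserve-ss-implies-HN}. The two semistability-preservation statements, Proposition \ref{prop:HN_existence_eval} in setting (i) and Proposition \ref{prop:induction-step-fun-conserves-ss} in setting (ii), supply exactly the hypothesis that Lemma \ref{lmm:preserve-ss-implies-HN} requires.

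I would proceed in two steps. First, for an arbitrary $f\colon P\hookrightarrow Q$ adapted to $Z$ with $P$ finite (an inclusion of upper semilattices in case (i) or a finite grid function in case (ii)), every nonzero $U\in\Pers P$ admits a unique HN filtration along $f^*Z$ by Subsection \ref{subsec:HN-finite-poset}. Combining the relevant semistability-preservation proposition with Lemma \ref{lmm:preserve-ss-implies-HN} yields the existence of the HN filtration of $f_*U$ along $Z$ together with the identity $\HN{Z}{}{f_*U}=f_*(\HN{f^*Z}{}{U})$, which is precisely the commutativity of \eqref{eqn:dgm-HN-e}. Second, for an arbitrary nonzero $V\in\Persfp Q$, it suffices to produce a common refinement $f$ simultaneously adapted to $V$ and to $Z$; writing $V\simeq f_*U$ and invoking the first step yields existence of the HN filtration of $V$ along $Z$. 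Uniqueness then follows from Proposition \ref{prop:exist_HN_abelian}.

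The main obstacle is the construction of the common refinement $f$ within the correct class. In case (i), I start from a map $g\colon P_V\hookrightarrow Q$ adapted to $V$ and an inclusion $h\colon P_Z\hookrightarrow Q$ of a finite upper semilattice with $Z\in\calZ^\text{eval}(h)$. The union $g(P_V)\cup h(P_Z)\subset Q$ need not be join-closed, so I would close it under finite joins in $Q$; this remains finite because it is the join-closure of a finite set in an upper semilattice, and its inclusion into $Q$ refines both $g$ and $h$ in the sense of Remark \ref{rem:adapted_to_incl}. In case (ii), the coordinate-wise union of the images of the finite grid functions adapted to $V$ and to $Z$ defines a finite grid function refining both and preserving the product structure. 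Once such an $f$ is in hand, the rest of the argument is essentially formal.
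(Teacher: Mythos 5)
Your proposal is correct and takes essentially the same route as the paper: both reduce to the finite case via Lemma \ref{lmm:preserve-ss-implies-HN} together with Proposition \ref{prop:HN_existence_eval} (case (i)) or Proposition \ref{prop:induction-step-fun-conserves-ss} (case (ii)), and then produce a single finite $f$ adapted simultaneously to $Z$ and to $V$. The only point where you go beyond the paper is the explicit join-closure of $g(P_V)\cup h(P_Z)$ in case (i); the paper leaves this to Remark \ref{rem:adapted_to_incl} (which permits enlarging $S$ to any finite superset, hence in particular to a join-closed one), so your extra care there is warranted but not a different argument.
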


\begin{proof}
   Let $f\colon P\hookrightarrow Q$ be adapted to $Z$ with $P$  finite. Recall from Subsection \ref{subsec:HN-finite-poset} that   HN filtrations exist in $\Persfp{P}=\Pers {P}$ along any stability condition.  By combining Lemma \ref{lmm:preserve-ss-implies-HN} and Propositions \ref{prop:HN_existence_eval} and \ref{prop:induction-step-fun-conserves-ss}, we obtain that the diagram \eqref{eqn:dgm-HN-e} is well-defined and commutes.   
    
    Given  a nonzero $V\in \Persfp Q$,  there exists $f\colon P\hookrightarrow Q$ with $P$ finite adapted to both $Z$ and $V$.   By diagram \eqref{eqn:dgm-HN-e}, the HN filtration $\HN Z{}{V}\simeq \HN Z{}{f_*f^*V} $ is well-defined.
\end{proof}

The above Theorem can be generalised to the case of infinite grid functions: 
\begin{cor}\label{cor:commutes_infinite_grid}
    Let $G\colon P\hookrightarrow Q$ be a (potentially infinite) grid function and let $Z\in\calZ(G)$. Then, for $U\in \Persfp P$, we have $\HN Z{} {G_*U}=G_*\HN {G^*Z}{} U$.
\end{cor}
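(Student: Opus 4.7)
The plan is to apply Lemma \ref{lmm:preserve-ss-implies-HN} to the exact induction functor $G_*\colon \Persfp P\to \Persfp Q$, with $Z$ on $Q$ and its pullback $G^*Z$ on $P$. This reduces the corollary to checking two things: (A) every nonzero $U'\in\Persfp P$ admits an HN filtration along $G^*Z$; and (B) if $U'\in\Persfp P$ is $G^*Z$-semistable, then $G_*U'$ is $Z$-semistable.

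For (A), I would verify that $G^*Z\in\calZ(P)$ and then invoke Theorem \ref{thm:existence_HN}(ii) on the cube $P\subseteq \bbZ^n$. Writing $Z=Z_{\alpha,\beta}$ as in Definition \ref{def:conditions-Z}, the imaginary part $\alpha$ is a finite combination of $\delta_c$-forms on bounded faces together with an integral against a compactly supported step function, so the pullback $G^*\alpha(p)=\alpha(\ind{\cub G(p)})$ is nonzero for only finitely many $p\in P$. The real part pulls back to $G^*\beta(p)=\int_{\cub G(p)}b$, which is strictly positive with $\sum_{p\in P}G^*\beta(p)=\int_Q b<\infty$. By Remark \ref{rem:Z_G-Zn}, this places $G^*Z$ in $\calZ(P)$, so Theorem \ref{thm:existence_HN}(ii) delivers the HN filtration of every nonzero object of $\Persfp P$ along $G^*Z$.

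For (B), I would extend the pixelisation argument of Proposition \ref{prop:induction-step-fun-conserves-ss} to infinite $G$. For any $0\neq W\subseteq G_*U'$ with $W\in\Persfp Q$, the maps $W_{G(\Gfloor q G)\leqslant q}$ are injective as restrictions of the isomorphisms $(G_*U')_{G(\Gfloor q G)\leqslant q}$, assembling into an inclusion $G_*G^*W\hookrightarrow W$. Granted the slope inequality $\mu_Z(G_*G^*W)\geqslant\mu_Z(W)$, combining with the $G^*Z$-semistability of $U'$ applied to $G^*W\subseteq G^*G_*U'\cong U'$ yields
\[\mu_Z(W)\leqslant \mu_Z(G_*G^*W)=\mu_{G^*Z}(G^*W)\leqslant \mu_{G^*Z}(U')=\mu_Z(G_*U'),\]
so $G_*U'$ is $Z$-semistable. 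Feeding (A) and (B) into Lemma \ref{lmm:preserve-ss-implies-HN} then gives $\HN{Z}{}{G_*U}=G_*\HN{G^*Z}{}{U}$.

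The main obstacle I expect is the slope inequality $\mu_Z(G_*G^*W)\geqslant\mu_Z(W)$ in (B). The comparison is not automatic from the pointwise inequality $\udim_{G_*G^*W}\leqslant\udim_W$, because both $\alpha$ and $\beta$ move in the same direction under pixelisation; one must exploit the fact that the ``new'' dimension $\udim_W-\udim_{G_*G^*W}$ vanishes at the grid vertices of $G$ where the discrete part of $\alpha$ concentrates, while $b$ remains uniformly positive. Finite presentability of $W\in\Persfp Q$ confines the entire comparison to a bounded region of $Q$, so that the finite-grid bookkeeping from Proposition \ref{prop:induction-step-fun-conserves-ss} transfers verbatim and the contributions of the infinite tails of $G$ stay controlled by the integrability of $b$.
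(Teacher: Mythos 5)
Your overall plan (apply Lemma \ref{lmm:preserve-ss-implies-HN} with $F=G_*$, after verifying (A) existence of HN filtrations along $G^*Z$ and (B) that $G_*$ sends $G^*Z$-semistables to $Z$-semistables) is the right framework, and step (A) matches the paper exactly. However, step (B) has a genuine gap: the one-step pixelisation $G_*G^*W\hookrightarrow W$ together with the slope inequality $\mu_Z(G_*G^*W)\geqslant\mu_Z(W)$ is the argument from Proposition \ref{prop:HN_existence_eval}, which only works for $\calZ^{\text{eval}}$ stability conditions, precisely because there $\alpha$ is supported at grid vertices and so is \emph{preserved} by pixelisation. For $Z\in\calZ(G)$, where $\Im Z$ integrates a step function over the interiors of cubes, this inequality fails. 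Concretely, take $Q=\bbR$, $\Img G=\bbZ$, $G_*U$ with barcode $\{[0,1),[0,2)\}$, and $W\subset G_*U$ with barcode $\{[0.5,1),[0,2)\}$; with $\Im Z$ integrating $\mathbf 1_{[0,1)}$ and $\Re Z$ integrating a positive step function $b$ with $b_{|[0,2)}=1$, one computes $\mu_Z(W)=1.5/2.5=0.6$ while $\mu_Z(G_*G^*W)=1/2=0.5$. Your remark that the ``new'' dimension vanishes at grid vertices is true but irrelevant here, since the step-function part of $\alpha$ lives on cube interiors, not vertices.

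You also suggest, as a fallback, that ``the finite-grid bookkeeping from Proposition \ref{prop:induction-step-fun-conserves-ss} transfers verbatim,'' but this is not possible as stated: that proof runs an induction on $N_H=|\bigsqcup_i \Img H_i\setminus\Img G_i|$ for \emph{finite} refinements $H$ of $G$, and when $G$ is infinite a finite grid function cannot be a refinement of $G$ (refinements must contain $\Img G$). The paper avoids both obstructions by choosing a \emph{finite} grid function $G'\colon P'\hookrightarrow P$ adapted to the given $U$ and such that $Z\in\calZ(G\circ G')$ (the latter achievable by refining $G'$ so each $\cub{G\circ G'}(p')$ either misses $\supp\Im Z$ or has $\cub{G'}(p')$ a single point); one then applies Lemma \ref{lmm:converse_ss_conserved} along $G'_*$ to pass semistability down to the $G'$-discretisation, and invokes the already-established Proposition \ref{prop:induction-step-fun-conserves-ss} for the finite grid function $G\circ G'$, noting $G_*U\simeq(G\circ G')_*(G'^*U)$. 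This reduction to the finite case is the essential idea your proposal is missing.
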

\begin{proof}
    By Remark \ref{rem:Z_G-Zn}, we have $G^*Z\in\calZ(P)$ and by Theorem \ref{thm:existence_HN}, both of the assignments $ \HN Z{} {-}\colon \Persfp P\to \Fil{\Persfp P }$ and $\HN {G^*Z}{} -\colon \Persfp Q\to\Fil{\Persfp Q}$ are well-defined. Let $U\in\Persfp P\setminus\set 0$, there is a finite grid function $G'\colon P'\hookrightarrow P$  such that $U$ is $G'$-discretisable. We can choose $G'$ such that  $Z$ lies in $\calZ_{G\circ G'}(Q)$. Indeed, we only need to refine $G'$ so that for all $p'\in P'$, either $\cub {G\circ G'}(p')$ does not intersect the support of $\Im Z$ or  $\cub {G'}(p')$ has cardinal 1.

    Assume that $U$ is $G^*Z$-semistable and let $U'$ be a $G'$-discretisation of $U$. By Lemma \ref{lmm:converse_ss_conserved}, $U'$ is $(G\circ G')^*Z$-semistable, and  by Proposition \ref{prop:induction-step-fun-conserves-ss}, the $Q$-persistence module $G_*U\simeq (G\circ G')_*U'$ is $Z$-semistable.   Finally, with  Lemma \ref{lmm:preserve-ss-implies-HN} applied to $G$, we have $\HN Z{} {G_*U}=G_*\HN {G^*Z}{} U$.
\end{proof}

\subsection{Proof of Proposition \ref{prop:induction-step-fun-conserves-ss}} Fix   a finite grid function $G\colon P\hookrightarrow Q$, a stability condition $Z\in\calZ(G)$ and a nonzero $P$-persistence module $U$ which is $G^*Z$-semistable.  Like in the proof of Proposition \ref{prop:HN_existence_eval}, the key to show that $G_*U$ is $Z$-semistable is to pixelate discretisable submodules $H_*X\subset G_*U$ while controlling their slope. In order to do so, we first study how the slope of  $H_*X$  evolves when the  grid function $H$ is perturbed. 

     Let  $H^\bullet\colon R\to Q$ be a family of  grid functions  indexed by   an interval $C$ of $\bbR$ of positive length. Namely, the (potentially improper) grid functions  $(H^x)_{x\in C}$ all have the same (co)domain, but their values are parametrised by $x$. Let $1\leqslant i\leqslant n$ and $p\in R_i$.
\begin{definition}\label{def:affine_at}
 The family $H^\bullet\colon R\to Q$ of grid functions is said to be \emph{affine at}  $(i,p)$ if $x \mapsto H^x_i(p)\in Q_i$ is affine  and for each other $1\leqslant i'\leqslant n $ and $p'\in R_{i'}$, the function $x\mapsto H^x_{i'}(p')$ is constant. 
\end{definition}

Henceforth, we assume that $H^\bullet$ is affine at $(i,p)$ with  $p\in R_i\setminus\set{\inf R_i,\sup R_i}$ and that the constant $\frac{\diff H^x_i(p)}{\diff x}$ is strictly positive.  The largest possible interval of parameters is the closed interval $C:=[x^-,x^+]$ such that  $H_i^{x^\pm}(p)$ coincides with the  constant $H^x_i(p\pm 1)$. The restriction of $H^x$ to 
 \begin{equation}\label{eqn:restr_remove_pt}
     R':=\bigg(\prod_{j=1}^{i-1}R_j\bigg)\times(R_i\setminus\set p)\times \bigg(\prod_{j=i+1}^nR_j\bigg)
 \end{equation}
 can be identified with a  grid function $\widehat H\colon \widehat R\to Q$. Indeed, the functions $x\mapsto H^x(p')$ are constant for each $p'\in R'$ and there is an isomorphism of posets $\widehat\tau$, independent of $x$, between $R'$ and a  cube $\widehat R$ of $\bbZ^n$.

 We denote respectively by $\pi_i$ and $e_i$ the $i$-th orthogonal projection and the $i$-th canonical basis vector in a cube of either $\bbR^n$ or $\bbZ^n$.

\begin{lemma}\label{lmm:affine_at_grid_fun}
  Using the above notation, let $H^\bullet\colon R\to Q$ be  affine at some $(i,p)$,  let $c$  be a face of a bounded cube  induced by an extension of  $\widehat H$ and let $X\in\Pers R$. 
    
    If  $\pi_i(c)$ is not a point, the function $a\colon x\in [x^-,x^+]\mapsto\delta_c(\udim_{H^x_*X})\in\bbR$  is affine. Otherwise, $a_{|(x^-,x^+]}$ is constant  and $a(x^-)- a(x^+)$ is a nonnegative linear combination of  $(\dim X_{r}-\dim X_{r-e_i})_{r\in\pi_i^{-1}(p)}$.
\end{lemma}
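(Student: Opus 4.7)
I would compute $a(x)$ directly by splitting the integral of $\udim_{H^x_*X}$ over $c$ according to the position of the moving breakpoint $T(x):=H_i^x(p)$. The key observation is that $H^x$ differs from the static grid function $\widehat H$ only through this single breakpoint in direction $i$, and $T(x)$ is affine in $x$ with $T(x^-)=H_i(p-1)$ and $T(x^+)=H_i(p+1)$. Consequently, for any $q\in Q$, the floor $\lfloor q\rfloor_{H^x}$ coincides with $\lfloor q\rfloor_{\widehat H}$ in every coordinate except possibly the $i$-th, and depends on $x$ in the $i$-th coordinate only when $q_i\in[H_i(p-1),H_i(p+1))$, taking value $p-1$ if $q_i<T(x)$ and $p$ if $q_i\geqslant T(x)$. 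Since extensions of $\widehat H$ share all breakpoints of $\widehat H$ in every coordinate $j\neq i$ within the original range, for $q\in c$ the $j$-th coordinate of $\lfloor q\rfloor_{H^x}$ is constant on $c$; I would denote by $r\in R$ the multi-index with $r_i=p$ and other entries given by these constants.

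\textbf{Case 1.} When $\pi_i(c)$ is not a point, $c_i$ is a full cell of the extension in direction $i$. Since the extension introduces no breakpoint in $(H_i(p-1),H_i(p+1))$, $c_i$ is either disjoint from $[H_i(p-1),H_i(p+1))$ (so $a$ is constant, hence affine) or coincides with this cell exactly. In the latter sub-case, splitting the integral over $c$ at $q_i=T(x)$ would yield
\[
a(x)=\frac{\bigl(T(x)-H_i(p-1)\bigr)\dim X_{r-e_i}+\bigl(H_i(p+1)-T(x)\bigr)\dim X_r}{H_i(p+1)-H_i(p-1)},
\]
which is affine in $T(x)$ and therefore in $x$.

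\textbf{Case 2.} When $\pi_i(c)=\{\gamma\}$, $\gamma$ is a breakpoint of the extension in direction $i$ (or lies outside the original range of $\widehat H$). For $x\in(x^-,x^+]$, one has $T(x)>H_i(p-1)$, so $T(x)\neq \gamma$, making the $i$-th coordinate of the floor at $q_i=\gamma$ independent of $x$; hence $a|_{(x^-,x^+]}$ is constant. At $x=x^-$ however the moving breakpoint $T(x^-)=H_i(p-1)$ collapses onto the fixed breakpoint $H_i(p-1)$, and by the convention $\lfloor q\rfloor_H=\max\{r:H(r)\leqslant q\}$ this shifts the $i$-th floor coordinate at $q_i=H_i(p-1)$ from $p-1$ up to $p$. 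Consequently
\[
a(x^-)-a(x^+)=\begin{cases}\dim X_r-\dim X_{r-e_i}&\text{if }\gamma=H_i(p-1),\\ 0&\text{otherwise},\end{cases}
\]
which in both cases is a nonnegative linear combination of the family $(\dim X_s-\dim X_{s-e_i})_{s\in\pi_i^{-1}(p)}$ (coefficient $1$ at $s=r$ in the first case, all zeros in the second).

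\textbf{Main obstacle.} The argument is essentially bookkeeping, and the only delicate point will be the asymmetric, one-sided discontinuity of $a$ at $x=x^-$: by the $\leqslant$ convention in the floor, when $T(x)$ collides with the fixed breakpoint $H_i(p-1)$ at $x^-$ the floor prefers the larger index $p$, which is precisely what produces the jump $\dim X_r-\dim X_{r-e_i}$ in the sub-case $\gamma=H_i(p-1)$ and explains why no analogous jump occurs at $x^+$.
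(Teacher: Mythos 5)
Your proposal is correct and follows essentially the same approach as the paper's proof: both decompose $a(x)$ over grid cells, observe that only the $i$-th direction and the indices $r_i\in\{p-1,p\}$ carry $x$-dependence, and then analyse the one-dimensional contributions $b_s(x)=\delta_{\pi_i(c)}(\ind{\cub{H_i^x}(s)})$ in the two sub-cases where $\pi_i(c)$ is the full cell $[H_i(p-1),H_i(p+1))$ or the point $\{H_i(p-1)\}$, reading the one-sided jump at $x^-$ off the floor convention. The only cosmetic difference is that the paper sums over all $r\in\pi_i^{-1}(p)$ with nonnegative weights $a_r=\prod_{j\ne i}\delta_{\pi_j(c)}(\ind{\cub{H_j^x}(r_j)})$, whereas you correctly single out the unique $r$ with $a_r=1$ (justified because cells of a refinement never straddle a cell of $\widehat H$), which yields a slightly sharper but equivalent conclusion.
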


\begin{proof}
The function $a$ decomposes  as 
\[a(x)=\sum_{r\in R}\dim X_r  \prod_{j=1}^n  \delta_{\pi_j(c)}(\ind{\cub {H^x_j}(r_j)})\]
where $r_j$ is the $j$-th component of $r$. Observe that the function $x\mapsto\cub {H^x_j}(r_j)$  is constant unless $i=j$ and $r_j\in \set{p-1,p}$. Hence, we only  need to compute the  functions $b_{s}\colon x\mapsto \delta_{\pi_i(c)}(\ind{\cub {H^x_i}(s)}) $ for $s\in \set{p-1,p}$. 

By definition, $\pi_i(c)$ is either a face of a bounded interval delimited by $\widehat H_i$ or is included in an unbounded interval delimited by $\widehat H_i$. Hence, $b_s$ can only be nonzero for  $\pi_i(c)=[H^x_i(p-1),H^x_i(p+1)) $ or $\pi_i(c)=\set{H^x_i(p-1)}$. In the first case, we have $\cub{H^x_i}(s)\subset \pi_i(c)$ for all $x\in[x^-,x^+]$ so
\[b_{s}(x)= \len { \cub{H^x_i}(s)}=\begin{cases}
    H^x_i(p)-H^x_i(p-1)&\text{if } s=p-1\\
     H_i^x(p+1)-H_i^x(p)&\text{if } s=p
\end{cases}\]
which is affine. In the second case, we have 
\[b_s(x)= \ind{H^x_i(p-1)\in \cub{H_i^x}(s)}=\begin{cases}
    1-\ind{x^-}&\text{if } s=p-1\\
     \ind{x^-}&\text{if } s=p
\end{cases}\]
which is constant on $(x^-,x^+]$. We rewrite the function $a$ as 
\[a(x)=C+\sum_{r\in\pi_i^{-1}(p)} a_r(\dim X_r b_p+\dim X_{r-e_i}b_{p-1})\]
where $C=\displaystyle\sum_{\pi_i(r)\notin\set{p-1,p}}\dim X_r\delta_c(\ind{\cub{H^x}(r)})$  and $\displaystyle a_r= \prod_{j\neq i}\delta_{\pi_j(c)}(\ind{\cub {H^x_j}(r_j)}) $. The result follows from the expression of $b_s(x)$ and the fact that $C$ and $a_r$ are nonnegative constants. 
\end{proof}

\begin{prop*}[Restatement of Proposition \ref{prop:induction-step-fun-conserves-ss}]
    Given  a finite grid function $G\colon P\hookrightarrow Q$ and a stability condition $Z\in\calZ(G)$, the pushforward $G_*U$  of a  $G^*Z$-semistable  nonzero $P$-persistence module $U$ is $Z$-semistable.
\end{prop*}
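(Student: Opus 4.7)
The plan is to combine the pixelation strategy of Proposition \ref{prop:HN_existence_eval} with the affine-deformation analysis of Lemma \ref{lmm:affine_at_grid_fun} to handle the more general stability conditions $Z\in\calZ(G)$. Given a nonzero submodule $W\subset G_*U$, I would first choose a finite grid function $H\colon R\hookrightarrow Q$ refining $G$ that discretises $W$ and write $W\simeq H_*X$ with $X\in\Persfp R$; by taking a common refinement (Remark \ref{rem:adapted_to_incl}) I may further assume $H$ is adapted to $Z$. The argument then proceeds by induction on $N(H):=\sum_{i=1}^n(|R_i|-|P_i|)$, the number of grid lines of $H$ not in $\Img G$.

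For the base case $N(H)=0$, the map $H$ factors through $G$, so $W\simeq G_*Y$ for some $Y\in\Persfp P$, and the argument of Proposition \ref{prop:HN_existence_eval} carries over via the adjunction $(G_*,G^*)$: the restriction $G^*W\simeq Y$ embeds into $G^*G_*U\simeq U$, and $G^*Z$-semistability of $U$ yields $\mu_Z(W)=\mu_{G^*Z}(G^*W)\leqslant\mu_{G^*Z}(U)=\mu_Z(G_*U)$.

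For the inductive step, I would pick an extra line $H_i(p)\notin\Img G_i$ at some interior $p\in R_i$ and build the one-parameter family $H^\bullet$ of Definition \ref{def:affine_at} on the maximal interval $[x^-,x^+]$. Applying Lemma \ref{lmm:affine_at_grid_fun} cube-by-cube to the supports of $\alpha=\Im Z$ and $\beta=\Re Z$: since the density $b$ defining $\beta$ is strictly positive and discretised by an extension of $G$ through full-dimensional cubes, $\beta(\udim_{H^x_*X})$ is affine and strictly positive on $[x^-,x^+]$; the full-dimensional contributions to $\alpha$ are likewise affine on $[x^-,x^+]$, while each face term $\lambda_c\delta_c$ with $\pi_i(c)$ a point is constant on $(x^-,x^+]$ with a jump at $x^-$. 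Because the derivative of a ratio of affines has the form $\mathrm{const}/\beta^2$, the function $\mu_Z(H^x_*X)$ is monotone on $(x^-,x^+]$, so its maximum on $[x^-,x^+]$ is attained at one of the endpoints $x^*\in\{x^-,x^+\}$. At this endpoint two adjacent lines of $H^{x^*}$ in direction $i$ coincide, so $H^{x^*}_*X\simeq \widehat H_*\widehat X$ for a proper finite grid function $\widehat H$ refining $G$ with $N(\widehat H)=N(H)-1$ and some $\widehat X\in\Persfp{\widehat R}$. Chaining with the inductive hypothesis would give $\mu_Z(W)\leqslant\mu_Z(\widehat H_*\widehat X)\leqslant\mu_Z(G_*U)$.

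The hard part will be justifying the invocation of the inductive hypothesis on $\widehat H_*\widehat X$: since the deformation keeps $X$ fixed while relabelling its fibres, the module $\widehat H_*\widehat X$ is not automatically a submodule of $G_*U$, because the natural morphism $\widehat H_*\widehat X\to G_*U$ factors through the potentially non-injective internal maps of $X$. I expect to resolve this either by strengthening the induction to pairs $(H',X')$ with $X'\subset H'^*G_*U$ in $\Persfp{R'}$ (a condition preserved by the grid deformation up to canonical identification), or by replacing $\widehat H_*\widehat X$ with the image of the natural morphism inside $G_*U$ and checking that the slope comparison still goes through. A related subtlety is ensuring that the sign of the jump of $\alpha$ at $x^-$ is compatible with the monotonicity direction of $\mu_Z$ on $(x^-,x^+]$, so that the supremum over $[x^-,x^+]$ is attained at a genuine endpoint rather than as an unachieved right-limit.
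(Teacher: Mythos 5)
Your overall strategy (induction on the number of extra grid lines, base case via pixelation as in Proposition~\ref{prop:HN_existence_eval}, inductive step via the affine deformation of Lemma~\ref{lmm:affine_at_grid_fun}) is precisely the paper's strategy. However, the two subtleties you flag at the end are genuine, and you do not resolve either; moreover, you miss the single observation that the paper uses to dispose of both of them at once.

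The missing ingredient is that $p\notin\Img G_i$ together with $W\subset G_*U$ forces the internal maps of $X=H^*W$ in direction $i$ at level $p$ to be \emph{injective}: for every $r\in\pi_i^{-1}(p)$, the map $X_{r-e_i}\to X_r$ is a restriction of the isomorphism $(G_*U)_{H(r-e_i)}\to (G_*U)_{H(r)}$, since both $H(r-e_i)$ and $H(r)$ lie in the same fiber of $\Gfloor{\ }{G}$. This single fact gives you (a) $W^-=H^{x^-}_*X\neq 0$, and (b) via the conclusion of Lemma~\ref{lmm:affine_at_grid_fun}, that the jump $a(x^-)-a(x^+)$ in each face term $\delta_c$ with $\pi_i(c)$ a point is a \emph{nonnegative} combination of $(\dim X_r-\dim X_{r-e_i})$, so $\alpha(\udim_{H^x_*X})$ is an affine function plus $\lambda\ind{x^-}$ with $\lambda\geqslant 0$. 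This is exactly what makes $m=\mu_Z(H^\bullet_*X)$ upper semicontinuous at $x^-$, so its supremum over $[x^-,x^+]$ is genuinely attained at an endpoint and not merely as an unachieved right-limit. Your second worry is thus resolved, but only once you import this injectivity from the standing hypothesis $p\notin\Img G_i$, which your write-up never invokes.

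For the submodule issue, the paper's resolution is cleaner than either of your two suggestions and does not require the internal maps of $X$ to be injective at all (it is a purely functorial argument). Since $R$ is a finite upper semilattice, $H^{x^\pm}_*$ is exact by Lemma~\ref{lmm:abelianity-semilattice}(ii) even when $H^{x^\pm}$ is improper, and there is a grid function $\tau\colon P\hookrightarrow R$ independent of $x$ with $G=H^x\circ\tau$ for all $x$. Applying $H^{x^\pm}_*$ to the inclusion $X=H^*W\subset H^*G_*U$ and using $H^*H_*\simeq\id$ yields
\[
W' = H^{x^\pm}_*X \;\subset\; H^{x^\pm}_*H^*G_*U \;\simeq\; H^{x^\pm}_*H^*H_*\tau_*U \;\simeq\; H^{x^\pm}_*\tau_*U \;\simeq\; G_*U,
\]
so $W'$ is a submodule of $G_*U$ which is $\widehat H$-discretisable, and the inductive hypothesis applies. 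Finally, two boundary cases you do not treat must be handled separately: if $W^+=0$ one checks that both $\alpha$ and $\beta$ vanish at $x^+$ and are affine on $(x^-,x^+]$, so $m$ is in fact constant there and the maximum sits at $x^-$; and if $p=\sup R_i$ the deformation interval is $[x^-,+\infty)$ and one uses that $\alpha(\udim_{H^x_*X})$ is eventually constant while $\beta(\udim_{H^x_*X})$ is decreasing and positive.
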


\begin{proof}
     Let $U$ be a $G^*Z$-semistable nonzero $P$-persistence module and let $H\colon  R\hookrightarrow Q$ be a finite refinement of $G$. We prove by induction over the cardinal 
     \[N_H:=\left\lvert\bigsqcup_{i=1}^n \Img H_i\setminus \Img G_i\right\rvert\]
     that  every nonzero $H$-discretisable submodule $W\subset G_*U$ satisfies     $\mu_Z(W)\leqslant\mu_Z(G_*U)$. We denote by $X$ the $R$-persistence module $H^*W$. 
     
\noindent\textbf{\underline{Base Case}}: when $N_H=0$, we can assume that  $G=H$ and we have $\mu_Z(W)= \mu_Z(G_*X)=\mu_{G^*Z}(X)\leqslant \mu_{G^*Z}(U)=\mu_Z(G_*U)$. 

\noindent\textbf{\underline{Induction Case}}: assume that $N_H>0$  and that the result holds for all refinements $H'$ of $G$ such that $N_{H'}=N_H-1$. Let $1\leqslant i\leqslant n$ and let $p\in \Img H_i\setminus \Img G_i$. There is a family of finite grid functions $H^\bullet\colon R\to Q$ affine at $(i,p)$  such that $H=H^x$ for some $x$. Note that when  $p=\inf R_i$, one has $H(p)\in\cub G({-\infty})$ so $H^x_*X\simeq W$ for all $x$.  We will investigate later the case $p=\sup R_i$ and assume that $p\notin\set{\inf R_i,\sup R_i}$. Like in the above discussion, we parametrise $H^\bullet$ by $[x^-,x^+]$ such that $H_i^{x^\pm}(p)=H_i(p\pm1)$.

\begin{center}
\begin{minipage}{0.99\textwidth}\centering

\begin{tikzpicture}[
roundnode/.style={circle,draw, inner sep=1.5pt},>=latex]

    \draw[gray,thin,opacity=0.5] (0,1)-- (8,1);
    \draw[gray,thin,opacity=0.5] (0,2)-- (8,2);
    \draw[gray,thin,opacity=0.5] (2,0)-- (2,4);
    \draw[very thick,black] (4.5,0)-- (4.5,4);
    \draw[gray,thin,opacity=0.5] (6.25,0)-- (6.25,4);

    \draw[very thick, ->] (4.25,0.5) -- (3.75,0.5);
    \draw[very thick, ->] (4.75,0.5) -- (5.25,0.5);




\node  (a) at (2,-0.5) {$x^-$};
\node(x) at (4.5,-0.55) {$x$};
\node  (b) at (6.25,-0.5) {$x^+$};

\end{tikzpicture}

\end{minipage}
\end{center}

We consider the two $Q$-persistence modules $W^\pm:=(H^{x^\pm})_*X $ and we prove that the following partial function admits a maximum which is reached either at $x^-$ or at $x^+$
\[m\colon \left\{\begin{array}{ccc}
[x^-,x^+]&\longrightarrow& \bbR\\
    x&\longmapsto &\mu_Z(H^x_*X)
\end{array}\right..\]

\underline{$m$ is monotonic  on $(x^-,x^+)$}:  since $W\subset G_*U$ and $p\notin \Img G_i$, we have that for every $r\in\pi_i^{-1}(p)$, the linear map $X_{r-e_i}\to X_{r} $ is a restriction of the isomorphism $(G_*U)_{H(r-e_i)}\to (G_*U)_{H(r)}$ and is hence injective. By Lemma \ref{lmm:affine_at_grid_fun},  the nonnegative function $x\in[x^-,x^+]\mapsto  \beta(\udim_{H^x_*X})$ is affine. Moreover, since the family $(\dim X_r-\dim X_{r-e_i})_{r\in\pi_i^{-1}(p)}$ is nonnegative,   $x\in[x^-,x^+]\mapsto \alpha(\udim_{H^x_*X})$ is the sum of an affine function and of $\lambda \ind{x^-}$ with $\lambda\geqslant 0$. Hence, the function $m_{|(x^-,x^+)}$ is well-defined and    monotonic as the quotient of an  affine function by a strictly positive affine function. 

\underline{$m$ has a maximum and it is reached at $x^-$ or $x^+$}: since $R\setminus\Img (\Gfloor {\ }{H^{x^-}})=  \pi_i^{-1}(p-1)$ and $W\neq0$, the injectivity of $X_{r-e_i}\hookrightarrow X_{r} $ for $r\in\pi_i^{-1}(p)$ ensures that $W^-$  is never null. Thus, the function $m$ is well-defined and upper semicontinuous at $x^-$. If $W^+\neq0$, $m$ is also well-defined at $x^+$ and  monotonic on $(x^-,x^+]$. It thus admits a maximum which is reached at $x^-$ or $x^+$.  Otherwise, if $W^+=0$, we then have that  both $x\mapsto \alpha(\udim_{H^x_*X})$ and $x\mapsto \beta(\udim_{H^x_*X})$ are affine on $(x^-,x^+]$ and null at $x^+$.   Whence, $m$ is constant on $(x^-,x^+)$ and we have $m(x^-)\geqslant m(x) $ for all $x\in[x^-,x^+)$. 

\underline{Case $p=\sup R_i$}: if $p=\sup R_i$, we write $x^+:=+\infty$ and $W^+:=(H_{R'})_*X$. The function $x\in (x^-,x^+) \mapsto \alpha(\udim_{{H^x}_*X})$ is constant while  $x\in (x^-,x^+)\mapsto \beta(\udim_{{H^x}_*X})$ is decreasing and positive. Hence, like in the general case, $m$ is monotonic on $(x^-,x^+)$, upper semicontinuous at $x^-$ and -- if $W^+\neq 0$ -- continuous at $x^+=+\infty$. Moreover, as before, $W^+=0$ implies that  $m$ is constant on $(x^-,x^+)$. 

In any case, we found a nonzero $W'\in \set{W^-,W^+}$ such that $\mu_Z(W')\geqslant \mu_Z(W)$. 

 \underline{$W'$ is submodule of $G_*U$}: there is a finite grid function $\tau\colon P\hookrightarrow R$, independent of $x$,  such that $G=H^x\circ \tau$ for all $x\in[x^-,x^+]$. Indeed, using the notation in \eqref{eqn:restr_remove_pt}, each $H^x$ is a refinement of $\widehat H$ (with an associated finite grid function $\widehat \tau$ independent of $x$) which is itself a refinement of $G$. By definition, $W'=H^{x^\pm}_* H^*W$, and by Lemma \ref{lmm:abelianity-semilattice}(ii), 
\[W'\subset H^{x^\pm}_* H^*G_*U\simeq  H^{x^\pm}H^*H_*\tau_*U\simeq H^{x^\pm}_*\tau_*U \simeq G_*U. \]

Finally, $0 \neq W'\subset G_*U$ is $\widehat H$-discretisable and by induction hypothesis, $\mu_Z(W)\leqslant\mu_Z(W')\leqslant \mu_Z(G_*U)$. 
\end{proof}

\subsection{HN filtered rank invariants} For this subsection $Q$ is either $\bbR^n$ or $\bbZ^n$. Let $x\in Q$, the map of posets  
\[T_x\colon y \in Q\mapsto x+y \in Q\]
is called the $x$-\emph{shift}. Its pullback $T_x^*\colon V\in \Persfp Q\mapsto  V\circ T_x\in \Persfp Q$  induces an endofunctor of $\Persfp Q$ which we call the $x$-\emph{shift functor}.  

We fix  a  stability condition $Z\in\calZ(Q)\cup\calZ^\text{eval}(Q)$ like in Theorem \ref{thm:existence_HN}. Following the idea behind the skyscraper invariant of computing HN filtrations along translations of $Z$, we define the following invariant of discretisable $Q$-persistence modules:

\begin{definition}\label{def:rank filtrations}
    The  \emph{HN filtered rank invariant}  of $V\in\Persfp Q$ along $Z$ is the  filtration $(s_{Z,V}^\theta)_{\theta\in\bbR^\text{opp}}$ of integer-valued functions on $Q\times Q$  given by
     \[s_{Z,V}^\theta\colon \left\{\begin{array}{ccc}
        Q\times Q  & \to&\bbZ_+\cup\set{+\infty} \\
          (x,y)&\mapsto&\begin{cases}
              \rank (\HN{Z}{\theta}{T_x^*V}_{0\leqslant y-x})&\text{if }x\leqslant y\\
              +\infty&\text{otherwise.}  
          \end{cases}
     \end{array}\right.\]
\end{definition}

Theorem \ref{thm:existence_HN} ensures that $s^\theta_{Z,V}$ is   well-defined  and that for $(x,y)\in Q\times Q$, the integer $s_{Z,V}^\theta(x,y)$ can be computed in any suitable discretisation of $V$.  Note that by Lemma \ref{lmm:preserve-ss-implies-HN}, we can translate $Z$ instead of $V$. Namely, for $x\leqslant y$ in $Q$ and $V\in\Persfp Q$, we have
\[s^\theta_{Z,V}(x,y)= \rank(\HN{T^x_*Z}{}{V})_{x\leqslant y},\]
where $T^x_*Z:=(T_x^*)^*Z$ is the pullback of $Z$ by $T_x^*$ like in Equation \eqref{eqn:commute_Z}.

\begin{prop}
There is $\theta_\text{min}\in\bbR$ such that for  all $\theta\leqslant \theta_\text{min}\in\bbR$ and for all  $V\in\Persfp Q$, we have  \[s_{Z,V}^{\theta}=\rho_V.\]
\end{prop}

\begin{proof}
    Given $x\in Q$, by definition, $s_{Z,V}^\theta(x,\bullet)=\rho_V(x,\bullet)$ for $\theta$ small enough. We now find a uniform lower bound for $\theta$. Assume that  $Z=Z_{\alpha,\beta}$ lies in $\calZ(G)$ for some finite grid function $G\colon P\hookrightarrow Q$. Following Remark \ref{rem:conditions-Z-as-delta-c}, we write 
    \[\alpha=\sum_{c}a_c\delta_c\qquad \text{and}\qquad \beta=\sum_cb_c\delta_c\]
    where the sums are over the partition into cubes delimited by an extension of $G$. Given  $W\in \Persfp Q$, we have the inequalities
    \[\alpha(\udim_W)\geqslant \min(0,\min_c a_c) \sum_c \delta_c(\udim_W)\]\[\beta(\udim_W)\geqslant \min_c(b_c) \sum_c \delta_c(\udim_W)\]
    where the minima and sums are over  the bounded cubes $\cub G(x)$ delimited by $G$.  Let  $\theta_\text{min}$ be the real number $\min(0,\min_c a_c) / \min_c(b_c)$. If   $V^0\subsetneq V^1\subsetneq \dots\subsetneq V^{\ell-1}\subsetneq V^\ell=T_x^*V$ is the HN filtration of $T_x^*V$ along $Z$, we have   $\mu_Z(V^\ell/V^{\ell-1})\geqslant \theta_{\text{min}}$. Then for $\theta\leqslant \theta_{\text{min}}$, we have $\HN{Z}{\theta}{T_x^*V}=T_x^*V$ and $s_{Z,V}^\theta(x,y)=\rank (T_x^*V)_{0\leqslant y-x}=\rho_V(x,y)$.
\end{proof}

The \emph{continuous skyscraper} invariant is the HN filtered rank invariant along $Z_{\delta_{\set 0},\beta}$ where  $\beta\in\mathcal F(Q,\bbR)^*$ is any positive linear form. It is closely related to the  finite version of the skyscraper invariant defined in \eqref{eqn:skyscraper}. Indeed, by combining  Theorem \ref{thm:existence_HN} and \cite[Proposition 3.3]{HN_discr}, one gets that for all $x\in Q$, the function $ s^\bullet_{Z_{\delta_0,\beta},V}(x,\bullet)$ can be computed as the HN type of the discretisation $G^*V$ along the stability condition $G^*T^{x}_*Z_{\delta_{0},\beta}=(T_{-x}\circ G)^*Z_{\delta_{0},\beta}$ where $G$ is a finite grid function  adapted to $V$ whose image contains $x$. Moreover, we will see in Appendix \ref{append:semialgebraic}, that if $\beta$ is induced by a step function, then $ s^\bullet_{Z_{\delta_0,\beta},V}(x,\bullet)$ only needs to be evaluated at finitely many values of $x\in Q$.  Conversely, given a finite grid function $G\colon P\hookrightarrow Q$, the HN type of $U\in\Pers P$ along a skyscraper weight at $p\in P$ can be read from the filtered rank invariant of $G_*U$ along the stability condition $Z_{\delta_{G(p)},\beta}$   for some choice of positive $\beta\in\mathcal F(Q,\bbR)^*$.

\section{Stability theorems}\label{sec:stability}

 The  rank invariant, equipped with the erosion distance introduced by Patel \cite{patel2018generalized}  has been shown to be  stable \cite{kim2021spatiotemporal} with respect to the interleaving distance. We extend this result to the HN filtered rank invariants obtained in Definition \ref{def:rank filtrations}. The HN filtrations also induce filtered persistence landscapes which  are a generalisation of the multiparameter persistence landscapes introduced by Vipond \cite{vipond2020multiparameter}. As a consequence of the above result, this last invariant is also stable. For this section, the poset $Q$ will be either $\bbR^n$ or $\bbZ^n$. 

\subsection{Functoriality of HN filtrations}

The key to adapt the existing stability theorems to the case of HN filtered rank invariants   will be the functoriality of HN filtrations. We restate {\cite[Theorem 2.8]{hille2002stable}} using Remark \ref{rem:quiver_poset}:

\begin{prop}\label{prop:functorial_HN-finite}
 Let $P$ be a finite poset  and let $Z$ be a stability condition on $\Pers P$. Then for every map  $f\colon U\to U'$  in $\Pers P$ and every  $\theta\in\bbR$
\[f(\HN Z\theta U)\subset\HN Z\theta {U'}. \]
\end{prop}

\begin{cor}\label{cor:functorial-HN-continuous}
    Let $f\colon V\to V'$ be a map in $\Persfp Q$ and let $Z\in\calZ(Q)$. Then for every  $\theta\in\bbR$
\[f(\HN {Z}\theta V)\subset\HN {Z}\theta {V'}. \]
\end{cor}
\begin{proof}
Assume  that $V$ and ${V'}$ are in $\Persfp Q$ and choose  a finite grid function $G\colon P\hookrightarrow Q$ such that $Z\in\calZ(G)$ and   $V$ and ${V'}$ both have a $G$-discretisation, respectively $U$ and $U'$. By  Proposition \ref{prop:functorial_HN-finite}, $G^*f$ induces a  map of $ P$-persistence modules $\widetilde f\colon\HN {G^*Z}\theta {U}\to\HN{G^*Z}\theta {U'}$ where $\widetilde f=G^*( G_*U\simeq V\overset f\to V'\simeq G_*U')$. By  Theorem \ref{thm:existence_HN}, $\widetilde f$ extends to a map
\[G_*\widetilde f\colon \HN{Z}\theta {G_*U}\to \HN{Z}{\theta}{G_*U'}.\]
Finally, by exactness of $G_*$, the map $\HN{Z}\theta V\simeq \HN{Z}\theta {G_*U}\overset{G_*\widetilde f}\to \HN{Z}\theta {G_*U'}\simeq \HN{Z}\theta {V'}$ is the restriction of $f$.  Hence, $f(\HN{Z}\theta V)\subset \HN{Z}{\theta}{V'}$ in $\Persfp Q$. 
\end{proof}

\subsection{Stability of HN filtered rank invariants}
 The $x$-\emph{shift map} of $V$ is the map of $Q$-persistence modules  $\shift xV\colon  V\to  T_x^*V$  defined by $(\shift{x}{V})_y:=V_{y\leqslant x+y}$.

For $\varepsilon>0$,  $\Vec \varepsilon$ denotes the vector $(\varepsilon,\dots,\varepsilon)\in Q$ and we see $Q$ and $\bbZ_+\cup\set{+\infty}$ as posetal categories (associated with the standard partial order). We recall the definition of the following two extended pseudometrics:

\begin{definition}[{\cite{lesnick2015theory}}]\label{def:interleaving-distance}Given $\varepsilon>0$, an $\varepsilon$-\emph{interleaving} between two $Q$-persistence modules $V$ and $W$ is a pair of maps  $f\colon V\to T_{\vec\varepsilon}^*W$ and $g\colon W\to T_{\vec\varepsilon}^*V$ in $\Pers Q$ such that
    \[T_{\vec\varepsilon}^*g\circ f= \shift{2\vec\varepsilon}V\hspace{3em}\text{and}\hspace{3em} T_{\vec\varepsilon}^*f\circ g = \shift{2\vec\varepsilon}W.\]
    The \emph{interleaving distance} between $V$ and $W$, denoted by $d_I(V,W)$, is the infimum over $\varepsilon\geqslant0$ such that there is an $\varepsilon$-interleaving between $V$ and $W$.
\end{definition}

\begin{definition}[\cite{patel2018generalized,kim2021spatiotemporal}]
    Consider two  functors \[F,G\colon Q^\text{opp}\times Q\to (\bbZ_+\cup\set{+\infty})^\text{opp}.\]   For $\varepsilon>0$, we say that there is an $\varepsilon$-\emph{erosion} between $F$ and $G$ if for all $(a,b)\in Q^\text{opp}\times Q$ 
    \[F(a-\Vec \varepsilon,b+\Vec \varepsilon)\leqslant G(a,b)\qquad\text{and}\qquad G(a-\Vec \varepsilon,b+\Vec \varepsilon)\leqslant F(a,b).\]
    The \emph{erosion distance} between $F$ and $G$, denoted by $d_E(F,G)$, is the infimum over $\varepsilon\geqslant0$ such that there is an $\varepsilon$-erosion between $F$ and $G$.
\end{definition}

An example of such functors is the rank invariant $\rho_V$ of some persistence module $V$. In particular, it is known to  be stable with respect to the erosion and interleaving distances:

\begin{theorem}[{\cite[Theorem 6.2]{kim2021spatiotemporal}}{\cite[Theorem 3.11]{puuska2020erosion}}]\label{thm:stab-rk-inv}
   Given two $Q$ persistence modules $V$ and $W$,  we have $d_E(\rho_V,\rho_W)\leqslant d_I(V,W)$    where $d_I$ is the interleaving distance.
\end{theorem}

We fix a stability condition $Z\in\calZ(Q)$ and two  persistence modules $V$ and $W$ in $\Persfp{Q}$. For convenience, we drop $Z$ from the $s^\theta_{Z,V}$ notation. The main argument of the proof of Theorem \ref{thm:stab-rk-inv} can be adapted to our setting and results in the following lemma:

\begin{lemma}\label{lmm:stab_main}
Let $x\leqslant x'\leqslant y'\leqslant y$ in $Q$ and assume the existence of maps  
\[f\colon V\to T_{x'-x}^*W\text{\quad and \quad}g\colon W\to T_{y-y'}^*V\]
such that $g\circ f=\shift{(y-x)-(y'-x')}V$. Then for all $\theta\in\bbR$, one has 
$s^\theta_{V}(x,y)\leqslant s^\theta_{W}(x',y')$.
\end{lemma}
\begin{proof}
    The properties of  $f$ and $g$ make the following  diagram well-defined and commutative
\[\scalebox{0.9}{\begin{tikzcd}[ampersand replacement = \&]
\HN{Z}\theta {T_x^*V}_{0}\ar[dr,"f_x"]\ar[rrr]\&\&\& \HN{Z}\theta {T_x^*V}_{y-x}\\
\& T_x^*f(\HN{Z}\theta {T_x^*V})_{0}\ar[r]\&T_x^*f(\HN{Z}\theta {T_x^*V})_{y'-x'}\ar[ru,"g_{y'}"]
\end{tikzcd}}
\]
Here, the top horizontal map is the restriction of $V_{x \leqslant y}=(T_x^*V)_{0\leqslant y-x}$ to the submodule $\HN{Z}\theta {T_x^*V}\subset T_x^*V$. By definition, its rank is $s_{V}^\theta(x,y)$.  The bottom horizontal map is the restriction of $W_{x'\leqslant y'}=T_{x'}^*W_{0\leqslant y'-x'}$ to the submodule $(T_x^*f)(\HN{Z}\theta {T_x^*V})\subset T_{x'}^*W$. We denote its rank as $s(x',y')$. 

Since the above diagram commutes, we have $s_{V}^\theta(x,y)\leqslant s(x',y')$. Moreover, Corollary \ref{cor:functorial-HN-continuous} implies that $(T_x^*f)(\HN{Z}\theta {T_x^*V})\subset \HN{Z}\theta {T_{x'}^*W}$. Finally, $s^\theta_{V}(x,y)\leqslant s(x',y')\leqslant s_W^\theta(x',y')$.  
\end{proof}

\begin{theorem}\label{thm:stability_erosion} Let $Z\in\calZ(Q)$ be a stability condition over $Q$ and let $V,W\in\Persfp{Q}$ be two $Q$-persistence modules, then for each  $\theta\in\bbR$, their HN filtered rank invariant $s_{Z,V}^\theta$ and $s_{Z,W}^\theta$ are functors    $Q^\text{opp}\times Q\to(\bbZ_+\cup \set{+\infty})^\text{opp}$. Moreover,
\[\sup_{\theta \in\bbR}d_E(s_{Z,V}^\theta,s_{Z,W}^\theta)\leqslant d_I(V,W).\]
\end{theorem}

\begin{proof}Let $V\in\Persfp{Q}$ and let $x\leqslant x'\leqslant y'\leqslant y$, the shift maps $\shift{x'-x}V$ and $\shift{y-y'}V$ satisfy the conditions of Lemma \ref{lmm:stab_main}. Hence,  $s^\theta_{V}(x,y)\leqslant s_{V}^\theta(x',y')$ or in other words $s^\theta_{V}:Q^\text{opp}\times Q\to (\bbZ_+\cup\set{+\infty})^\text{opp}$ is a functor.

Let $V,W\in\Persfp{Q}$ and assume that there is an $\varepsilon$-interleaving $f\colon V\leftrightarrow W:g$. Let $\theta \in \bbR$, we show that $d_E(s_{V}^\theta,s_W^\theta)\leqslant\varepsilon$. For $a\leqslant b$ in $Q$,  the maps $f$ and $g$ satisfy the conditions of Lemma \ref{lmm:stab_main}  with $(x\leqslant x'\leqslant y'\leqslant y):=(a-\Vec \varepsilon\leqslant a\leqslant b\leqslant b+ \Vec \varepsilon)$.  Hence, $s_{V}^\theta(a-\Vec\varepsilon,b+\Vec\varepsilon)\leqslant s^\theta_W(a,b)$ and by symmetry $s_W^\theta(a-\Vec\varepsilon,b+\Vec\varepsilon)\leqslant s^\theta_V(a,b)$.
\end{proof}

\begin{rem}\label{rmk:null-Z-stab}
   Theorem \ref{thm:stab-rk-inv} for discretisable persistence modules is a special case of Theorem \ref{thm:stability_erosion} obtained when the image of $Z\colon K(\Persfp Q)\to \bbC$ is included in $\bbR$.   
\end{rem}

\begin{nota}\label{nota:HN-dist}Given two $Q$-persistence modules $V,V'\in\Persfp Q$ and a stability condition $Z\in\calZ(Q)$, we write
   \[d_{\bf HN}(V,V'):=\sup_{\theta \in\bbR}d_E(s_{Z,V}^\theta,s_{Z,W}^\theta)\]
   and we call this extended pseudometric the \emph{HN distance} along $Z$.
\end{nota}

\subsection{Stability of HN filtered landscapes} The HN filtered rank invariants induce filtered persistence landscapes. We show that those are also stable.

\begin{definition}Given a functor $s\colon Q^\text{opp}\times Q\to (\bbZ_+\cup \set{+\infty})^\text{opp}$, the \emph{persistence landscape} of $s$ is the function $\lambda_s\in \mathrm L^\infty(\bbN\times Q)$ given for $(k,x)\in\bbN\times Q$ by
\[\lambda_s(k,x):=\sup\set{\varepsilon>0\mid s(x-h,x+h)\geqslant k,\forall \|h\|_\infty\leqslant \varepsilon}.\]  
\end{definition}

The persistence landscape introduced in \cite{persistencelandscapes,vipond2020multiparameter} is the invariant  $V\in \Pers Q\mapsto \lambda_{\rho_V}\in \mathrm L^\infty(\mathbb N\times Q)$. They are stable with respect to the interleaving distance \cite[Theorem 30]{vipond2020multiparameter}. 

\begin{prop}\label{prop:landscape_erosion}
    Given two functors $r,s\colon Q^\text{opp}\times Q\to (\bbZ_+\cup \set{+\infty})^\text{opp}$ we have
    \[\lVert \lambda_r-\lambda_s \rVert_\infty \leqslant d_E(r,s).\]
\end{prop}
\begin{proof}
   Let $\varepsilon>d_E(r,s)$ and let $k,x\in\bbN\times Q$. We  show that $|\lambda_r(k,x)-\lambda_s(k,x)|\leqslant\varepsilon$. Without loss of generality,  we restrict ourselves to the case where $\varepsilon\leqslant \lambda_r(k,x)\geqslant\lambda_s(k,x)$. Let $h\in Q$ such that $\lVert h\rVert_\infty\leqslant \lambda_r(k,x)-\varepsilon$. By definition,
\[s(x-h,x+h)\geqslant r(x-h-\Vec\varepsilon,x+h+\Vec\varepsilon)\geqslant k.\]
Hence, $\lambda_s(k,x)\geqslant \lambda_r(k,x)-\varepsilon$ and $\lambda_r(k,x)- \lambda_s(k,x)\leqslant d_E(r,s)$.
\end{proof}

As before, let $Z\in\calZ(Q)$  and let $V\in \Persfp{Q}$. The \emph{HN filtered persistence landscape} of $V$ along $Z$ is the filtration $(\lambda_{s_{Z,V}^\theta})_{\theta\in\bbR^\text{opp}}$ of $\lambda_{\rho_V}$ in $\mathrm L^\infty (\bbN\times  Q)$. The stability of this last invariant is a direct corollary of Theorem \ref{thm:stability_erosion} and Proposition \ref{prop:landscape_erosion}.

\begin{cor}\label{cor:landscape-stability} Given   two discretisable persistence modules $V$ and $W$, their HN filtered persistence landscapes satisfy
\[\sup_{\theta\in\bbR}\lVert\lambda_{s_V^\theta}-\lambda_{s_W^\theta}\rVert_\infty\leqslant d_I(V,W).\]
\end{cor}

\appendix

\section{Discreteness of HN filtered rank invariants}\label{append:semialgebraic}

Let $Q$ be either $\bbZ^n$ or $\bbR^n$, let $V$ be a discretisable $Q$-persistence module and let $Z\in\calZ(Q)$ be a discretisable stability condition over $Q$. Example \ref{ex:compute-s} shows that, in general,  the HN filtered rank invariants over $Q$ are not   discretisable functions in the sense of Subsection \ref{subsec:HN_abelian}. We show instead that when $\Im Z$ is nonnegative, the HN filtered rank invariants over $Q$ are constant on the parts of a subdivision of its domain into a finite  number of semialgebraic sets.

\begin{ex}\label{ex:compute-s}
    Let $Q=\bbR^n$, $V=\K_{[0,1)^n}$,  $\alpha=\delta_0$ and $\beta\colon f\mapsto \int_{\bbR^n}bf$ where $b>0$  is an  integrable step function such that $b_{|[0,1)^n}=1$. Given $\theta>0$ and $x\leqslant y$ in $\bbR^n$, the rank $s_{Z_{\alpha,\beta},V}^\theta(x,y)$ lies in $\set{0,1}$. Moreover, $s_{Z_{\alpha,\beta},V}^\theta(x,y)=1$  if and only if $x\leqslant y$ lie in $[0,1)^n$ and $\mathrm{vol\ } (\subrep x\cap[0,1)^n)\leqslant 1/\theta$.  This last condition is a polynomial inequality of degree  $n+1$ in $(x,y,\theta)\in\bbR^{n}\times\bbR^n\times\bbR$. 
\end{ex}

The wall-chamber structure of the space of stability conditions has been well-studied for representations of finite quivers \cite{hille2002stable} and more generally for modules over a finite-dimensional algebra \cite{brustle2019wall}. For the purpose of this paper, we use the following definitions inspired by \cite[Section 3]{hille2002stable}.  Let $P$ be a finite poset and let   $U\in \Pers P$. Following equation \eqref{eqn:stab_cond_quiver}, we see a stability condition $\widehat Z$ over $P$ as a vector of $\bbR^P\times \bbR_{>0}^P$. A \emph{wall system} $\set{W^i}_I$ for $U$  is a set of codimension 1  algebraic subvarieties of $\bbR^{P}\times \bbR^P$  such that for each $I'\subset I$, and each   connected component $C$ of    $ (\bigcap_{I'} W^i)\setminus (\bigcup_{I\setminus I'} W^i)$, the HN filtration $0\subsetneq U^1\subsetneq \dots\subsetneq U^\ell =U$ of $U$ along $\widehat Z$ does not depend on $\widehat Z\in C\cap (\bbR^P\times\bbR_{>0}^P)$.  In that case, the semialgebraic sets  $C\cap (\bbR^P\times\bbR_{>0}^P)$ are called \emph{chambers} and each $W^i$ is called a \emph{wall}.  A key observation is that  the codimension 1 algebraic subvarieties among
  \begin{equation}
      \label{eqn:walls-finite-quiver}\left(\set{ \widehat Z\in \bbR^P\times \bbR_{>0}^P\ \big|\ \frac{\Im\widehat Z(d)}{\Re\widehat Z(d)}= \frac{\Im\widehat Z(d')}{\Re\widehat Z(d')} }\right)_{\ 0\leqslant d,d' \leqslant \udim_U}
  \end{equation}
  provide a finite wall system for $U$. 

   In order to extend the wall-chamber structure to the infinite case, we first  study common refinements between two families of grid functions. Let  $(G^{1,x}\colon P^1\hookrightarrow Q )_x$ and $(G^{2,x}\colon P^2\hookrightarrow Q )_x$ be two families of  grid functions over $Q$ whose values (but not their domains) depend on a  parameter $x\in C$. An \emph{independent  common refinement} of $(G^{1,x})_x $ and $(G^{2,x})_x$ is a choice for each $x$ of a  common refinement $G^x\colon P\hookrightarrow Q$ of $G^{1,x}$ and $G^{2,x}$ such  that there are two grid functions $\tau^1\colon P^1\hookrightarrow P$ and $\tau^2\colon P^2\hookrightarrow P$ independent of $x$ which fit for all $x$  into the following commutative diagram:
\begin{equation}
    \label{eqn:independent-ref}\begin{tikzcd}
    P^1\ar[dr,swap,hookrightarrow,"G^{1,x}"]\ar[r,hookrightarrow,"\tau^1"]&P\ar[d,hook,"G^x"]\ar[r,hookleftarrow,"\tau^2"]&P^2\\
    & Q \ar[ur,swap,hookleftarrow,"G^{2,x}"]&
\end{tikzcd}
\end{equation}

\begin{lemma}\label{lmm:refin}
    If for  $1\leqslant i\leqslant n$ and $(p^1,p^2)\in P_i^1\times P_i^2$, the  functions
    \begin{equation}
        \label{eqn:supp-lemma-refin}
        x\mapsto \left(G_i^{1,x}(p^1)- G_i^{2,x}(p^2)\right)
    \end{equation}
  have all constant sign (in $\set{-1,0,1}$), then $(G^{1,x})_x $ and $(G^{2,x})_x$ have an independent   common refinement $(G^x)$ whose image is $\Img G^x=\prod_i\Img G_i^{1,x}\cup\Img G_i^{2,x}$.
    \end{lemma}
\begin{proof}We first assume that $n=1$.  A minimal common refinement $G\colon P\hookrightarrow Q$ of two 1-dimensional grid functions $G^1$ and $G^2$ over $Q$ is a grid function  $G\colon P\cong \Img G^1\cup\Img G^2\subset Q$ and is unique up to translations of $P\subset \bbZ$. For $k\in\set{1,2}$, the grid function $\tau^k:=(G_{|P\to \Img G})^{-1}\circ G^k$ from $P^k$ to $P$ is entirely determined by the signs of $\tau^k(p^k)- p$ for  $p^k\in P^k$ and $p\in P$.  By minimality, $G(p)$ is of the form $G^\ell(p'^\ell)$ with $p'^\ell\in P^\ell$ and $\ell\in \set{1,2}$ and we have
    \[ \sign(\tau^k(p^k)- p)=\sign( G^k(p^k)- G(p)) =\begin{cases}
        \sign( p^k-p'^k)&\text{ if }\ell=k\\
        \sign(G^k(p^k)- G^\ell(p'^\ell))&\text{ otherwise}
    \end{cases}\]
    Hence, if we parametrise $G^1$ and $G^2$ by $x$, the grid function $\tau^k$  clearly fits into \eqref{eqn:independent-ref} and only depends on $P^k$ and the signs of the functions in \eqref{eqn:supp-lemma-refin}.

    Assume now that $n>1$.  For  each $1\leqslant i\leqslant n$, the existence of the $i$-th component of $G^x$ is obtained by applying the result of the Lemma to  $(G^{1,x}_i)_x $ and $(G^{2,x}_i)_x$. 
\end{proof}

  A family of grid functions $G^x\colon P\hookrightarrow Q$ indexed by $x$ is said to be \emph{affine} if $x\in C\mapsto G^x(p)\in Q$ is affine in $x$ for each $p\in P$. For $x\in Q$, let $T_x\colon Q\to Q$ denote the $x$-shift and let $G^0\colon \set 0\to Q$ denote the null grid function over $Q$.  Let $G^V\colon P^V\hookrightarrow Q$ and $G^Z\colon P^Z\hookrightarrow Q$ be finite grid functions adapted to respectively $V$ and $Z$. We further choose an extension $\widetilde G^Z\colon \bbZ^n\hookrightarrow Q$ of $G^Z$ such that $\Re Z$ is induced by a $\widetilde G^Z$-discretisable function.

\begin{lemma}\label{lmm:s-poly-bounded-C}  There exists a partition $\mathcal P$ of $Q$ into a finite number of cubes  such that for each part $C\in\mathcal P$, there is a finite affine independent common refinement  $(G^x\colon P\to Q)_{x\in C}$    of $(T_{-x}\circ G^V)_x$, $(G^Z)_x$ and $(G^0)_x$ so that the function 
\[x\in C\mapsto (G^x)^*Z\in\bbR^P\times\bbR_{>0}^P\]
is a polynomial in $x$ whenever $C$ is bounded.
\end{lemma}

\begin{proof} 
 \ 

\underline{Construction of $\mathcal P$ and $G^x$}:  The functions in \eqref{eqn:supp-lemma-refin} with $((G^{1,x}),(G^{2,x}))$ being every possible pair of elements of $\set{(T_{-x}\circ G^V),G^Z,G^0}$ delimit a finite partition $\widehat {\mathcal P}$ of $Q$ where inside each part, the sign (in $\set{-1,0,1}$) of each of those functions is constant. Moreover, since $G^Z$ and $G^0$ are  independent of $x$ and each $(T_{-x}\circ G^V)_i$ is affine in  $x_i$ and independent of $(x_j)_{j \neq i}$, the parts of this partition are cubes. 

Let  $C$ be a  cube in  $\widehat {\mathcal P}$,  by Lemma \ref{lmm:refin}, there is a   finite independent common refinement $  G^x\colon   P\hookrightarrow Q$ of $(T_{-x}\circ G^V)$, $G^Z$ and $G^0$ indexed by $x\in C$. And we can choose $(  G^x)_x$ to be affine since $(T_{-x}\circ G^V)$, $G^Z$ and $G^0$  are all affine. 

Let $C$ be a  bounded cube in  $\widehat {\mathcal P}$. Since $(  G^x)$ is affine, there is a bounded cube $B$ of $Q$ which contains $\Img   G^x$ for each $x\in C$. Hence, $\Img \widetilde G^Z_i\cap B_i$ is finite for all $i$, and  by Lemma \ref{lmm:refin},  we can further partition $C$ into  a finite number of cubes $C'$ inside which $(  G^x)$  and $\widetilde G^Z$ have an affine independent common refinement $(\widetilde G^x\colon\bbZ^n\to Q)$. Let $\mathcal P$ be the refinement of $ \widehat {\mathcal P}$ induced by this partition of each bounded  $C\in \widehat {\mathcal P}$.

\underline{Semialgebraicity of $(G^x)^* Z$}: Assume that $C\in\mathcal P$ is bounded. For $x\in C$, we denote by $Z^x=(\Im Z^x,\Re Z^x)\in \bbR^P\times\bbR_{>0}^P$ the stability condition $(G^x)^*Z$. Let $\tau\colon P\to\bbZ^n$ be a grid function such that $\widetilde G^x\circ \tau = G^x$ for all $x\in C$. Fix $p\in P$, one can check that  for $z\in \bbZ^n$,  
\[\cub{\widetilde G^x}(z)\cap \cub{G^x}(p) =\begin{cases}
   \cub{\widetilde G^x}(z)&\text{if } z\in \cub{\tau}(p)\\
   \emptyset &\text{otherwise}
\end{cases} \]

So for a face $c$ of a cube of the form $\cub{\widetilde G^x}(z)$, the map $x\in C \mapsto \delta_{c}(\ind{\cub{G^x}(p)})$ is polynomial of degree at most $n$.    Since $\Im Z^x_p$ is a linear combination of finitely many  such maps, it is also polynomial in $x\in C $. By independence of $\widetilde G^x$, there is $b\colon \bbZ^n\to \bbR_{>0}$ such that for every $x\in C $ we have $\Re Z^x([V])=\int_Q (\widetilde G^x_*b)\udim_V$. The function $x\in C \mapsto \Re Z^x_p$ can now be expressed as the pointwise limit  when $N\to +\infty$ of the polynomials
\[R_N\colon x\in C \mapsto\sum_{z\in [-N,N]^n\cap\cub \tau(p)} b_z\vol({\cub{\widetilde G^x}(z)}).\]
whose variables are the $x_i$ such that $C_i$ is not a point. Finally, since the degree of $R_N$ is bounded by $n$, by Lagrangian approximation, the coefficients of $R_N$ are also bounded and the limit $\Re Z^x_p$ is itself polynomial.  
\end{proof}

\begin{definition}[{\cite[Section 1]{mccrory1997algebraically}}]\label{def:constructible}
    A function $\phi\colon X\to \bbZ$ from a real algebraic set $X$ is called \emph{semialgebraically constructible} if it can be written as a finite sum
    \[\phi=\sum_i m_i\ind{X_i}\]
    where for each $i$,  $m_i\in\bbZ$ and $X_i$ is a semialgebraic subset of $X$.
\end{definition}

We denote by $Q_+^2$ the half-space $  \set{(x,y)\in Q^2\mid x\leqslant y}$ of $Q^2$.

\begin{prop}\label{prop:semialgebraic}  
    Assume that $\Im Z$ is nonnegative. The HN filtered rank invariant 
    \[s_{Z,V}^\bullet\left\{\begin{array}{ccc}
   Q_+^2   \times \bbR& \longrightarrow&\bbZ\\
        (x,y,\theta)&\longmapsto &s_{Z,V}^\theta(x,y)
    \end{array}\right.\]
    is a semialgebraically constructible   function.
\end{prop}

\begin{proof}Let $C=C_1\times\dots\times C_n$ be a cube of a partition given by Lemma \ref{lmm:s-poly-bounded-C} and let $\widetilde U\in \Pers {P^V}$ be a $G^V$-discretisation of $V$.

     \underline{Unbounded below}: If one of the components $C_i$ is unbounded below, then for all $x\in C$, we have 
      \[s_{Z,V}^\theta(x,y)=\rank\HN Z\theta {T_{x}^*V}_{0\leqslant y-x}=\begin{cases}
          \rank V_{x\leqslant y} &\text{if }\theta\leqslant 0\\
          0&\text{otherwise}.
      \end{cases}\]Indeed, for all $(p^V,p^Z)\in P_i^V\times P^Z_i$, the function $x_i\in C_i \mapsto G_i^V(p^V)-x_i-G^Z_i(p^Z)\in Q_i$ is of constant sign and is  hence positive -- meaning that the supports of $T^*_xV$ and $\Im Z$ are disjoint.   

  \underline{Bounded}:  If $C$ is bounded, then  using the result and notations of  Lemma \ref{lmm:s-poly-bounded-C}, there is $U\in \Pers P$ such that  $T^*_xV\simeq G^x_*U$ for all $x\in C$. Indeed, one can choose $U= \tau_*\widetilde U$ where $G^x\circ \tau =G^V$. By Theorem \ref{thm:existence_HN}, we obtain $\HN{Z}{\theta}{G^x_*U} = G^x_*\HN{(G^x)^*Z}{\theta}{U}$ for every $\theta\in \bbR$, and in particular,
     \[s_{Z,V}^\theta(x,y)=\rank\HN {(G^x)^*Z}\theta {U}_{\lfloor 0 \rfloor_{G^x}\leqslant\lfloor y-x \rfloor_{G^x}} .\]

    Since $x\in C\mapsto (G^x)^*Z\in \bbR^P\times\bbR^P_{>0}$ is polynomial, the inverse image of the chambers defined in \eqref{eqn:walls-finite-quiver} are again semialgebraic.   Fix one such semialgebraic set $C'$, the HN filtration  $0\subsetneq U^1\subsetneq \dots\subsetneq U^\ell=U$ of $U$ along $(G^x)^*Z$ does not depend on $x\in C'$. Let $p\in P$ and $0\leqslant i\leqslant \ell$, the set $S_{C',p,i}$ of all $(x,y,\theta)\in C'\times Q\times \bbR$ such that 
        \begin{equation}\label{eqn:semialg-slope}
        y-x\in \cub{G^x}(p) \hspace{3em}\text{and}\hspace{3em} \mu_{(G^x)^*Z}(U^i/U^{i-1})\geqslant \theta> \mu_{(G^x)^*Z}(U^{i+1}/U^i)
    \end{equation}
      is semialgebraic. There are finitely many $S_{C',p,i}$   and inside each of them,  $s^\theta_{Z,V}(x,y)$ is the constant given by $\rank (U^i)_{0\leqslant p}$.

   \underline{Unbounded above}:   Finally, assume that each component of  $C=C_1\times\dots\times C_n$ is either bounded or unbounded above. Let $I_b$ and $ I_u$ denote  the set of indices $1\leqslant i \leqslant n$ where $C_i$ is respectively bounded and unbounded above. Let $\widetilde x$ be the join of $0$ and of $G^Z(\inf P^Z)$; for $W\in \Persfp Q$, define the discretisable submodule $W^{\geqslant \widetilde x} $  of $W$ determined by the subspaces
      \[W^{\geqslant \widetilde x}_x:=\begin{cases}
          W_x&\text{if } x\geqslant \widetilde x\\
          0& \text{otherwise}.
      \end{cases} \]
 Since $\Im Z$ is nonnegative, if $0\subsetneq W^{\geqslant \widetilde x} \subsetneq W$, we have $\mu_Z(W^{\geqslant \widetilde x} )>\mu_Z(W)$. Hence, every $Z$-semistable $W$ satisfies $W=W^{\geqslant\widetilde x}$ and 
 \[{\HN{Z}{}{V}}^{\geqslant\widetilde x}=\HN{Z}{}{V^{\geqslant\widetilde x}}.\]
 Let $x_u\in \prod_{i\in I_u} C_i$, the cube $C':=\set {x_u}\times\prod_{i\in I_b}C_i$ is bounded and by the previous case $C'\times Q\times \bbR$ can be partitioned into finitely many semialgebraic  sets $(S_j)_j$ such that each $(s^\bullet_{Z,V})_{|S_j} $ is constant. Given $(x,y)\in C\times Q$, we denote $x':=x_u+\pi_{I_b}(x)$ and $y':= y + x'-x$ where $\pi_{I_b}$ is the projection onto $\prod_{I_b}C_i$.   Moreover, for $i\in I_u$ and $p_i^V\in P^V_i$, the expression $x_i\in C_i \mapsto G^V_i(p^V_i)-x_i-\widetilde x_i$ has constant sign and is hence negative. Thus, we have $(T_x^*V)^{\geqslant\widetilde x}\simeq (T_{x'}^*V)^{\geqslant\widetilde x}$ and  for every  $y\geqslant x$,
 \begin{align*}
     s^\bullet_{Z,V}(x,y)=\rank [(\HN{Z}{\bullet}{T_x^*V})^{\geqslant\widetilde x}]_{0\leqslant y-x}&=\rank \HN{Z}{\bullet}{(T_x^*V)^{\geqslant\widetilde x}}_{0\leqslant y-x}\\
     &=\rank \HN{Z}{\bullet}{(T_{x'}^*V)^{\geqslant\widetilde x}}_{0\leqslant y'-x'}\\
     &=s^\bullet_{Z,V}(x',y').
 \end{align*}
Since the map $\pi\colon (x,y,\theta)\mapsto (x',y',\theta)$ is affine, the set $C\times Q\times\bbR$ can be partitioned into finitely many semialgebraic sets $(\pi^{-1}(S^j))_j$ inside which $  s^\bullet_{Z,V}$ is constant.
\end{proof}

The polynomial inequalities defining the semialgebraic sets used  in the proof of Proposition \ref{prop:semialgebraic}  are given by  \eqref{eqn:walls-finite-quiver}, \eqref{eqn:supp-lemma-refin} and \eqref{eqn:semialg-slope}. Using the result from \cite{cheng2023deterministic}, they  can be computed algorithmically. In general, this set of inequalities is not minimal and its size is not polynomial in $|P^V|$ and $\max \udim_V$.   However, one could hope to approximate $s^\theta_{Z,V}$     by  restricting it  to a fine enough regular grid function.

\bibliographystyle{abbrv}
\bibliography{main}

\end{document}